\newcommand{\R}{\mathbb{R}}
\newcommand{\N}{\mathbb{N}}
\newcommand{\I}{\mathcal{I}}
\newcommand{\M}{\mathcal{M}}
\newcommand{\MM}{\mathcal{M}}
\def\R{{\mathbb R}}
\def\N{{\mathbb N}}
\def\cM{{\mathcal M}}
\def\cB{{\mathcal B}}
\def\k{\kappa}
\def\1{\left(}
\def\2{\right)}
\def\3{\left\{}
\def\4{\right\}}
\def\8{\infty}
\theoremstyle{plain}
\newtheorem{defi}{Definition}[section]
\newtheorem{teo}[defi]{Theorem}
\newtheorem{cor}[defi]{Corollary}
\newtheorem{lema}[defi]{Lemma}
\theoremstyle{definition}
\theoremstyle{remark}
\numberwithin{equation}{section}
\begin{document}

\title[]{Harnack inequality and its application to nonlocal eigenvalue problems in unbounded domains}
\author[]{Gonzalo D\'avila}
\address{
Departamento de Matem\'atica, Universidad T\'ecnica Federico Santa Mar\'ia \\
Casilla: v-110, Avda. Espa\~na 1680, Valpara\'iso, Chile
}
\email{gonzalo.davila@usm.cl}

\author[]{Alexander Quaas}
\address{
Departamento de Matem\'atica, Universidad T\'ecnica Federico Santa Mar\'ia \\
Casilla: v-110, Avda. Espa\~na 1680, Valpara\'iso, Chile}
\email{alexander.quaas@usm.cl}

\author[]{Erwin Topp}
\address{
Erwin Topp:
Departamento de Matem\'atica y C.C., Universidad de Santiago de Chile,
Casilla 307, Santiago, Chile.}
\email{erwin.topp@usach.cl}

\date{\today}

\begin{abstract}
We prove the Harnack inequality for general nonlocal elliptic equations with zero order terms.
As an application we prove the existence of the principal eigenvalue in general domains. Furthermore, we study the eigenvalue problem associated to the existence of self-similar solutions to the parabolic problem and provide estimates on the decay rate.
%
%
\end{abstract}
\keywords{}
\maketitle
\section{Introduction.}

The study of the principal eigenvalue in unbounded domains, particularly in $\R^N$, appears naturally when studying the existence of self-similar solutions of parabolic equations. 

This can be seen for example in the classical heat equation
\begin{align*}
u_t-\Delta u=0\ \ \text{in }\R^N\times\R_+.
\end{align*}

Since the equation is invariant under the change $(x,t)\rightarrow (c^{1/2} x, c t)$ for $c>0$, the self-similar solutions $u$ must be of the form 
\begin{equation}\label{hola}
u(x,t)=\frac{1}{t^\lambda}v\left(\frac{x}{t^{1/2}}\right).
\end{equation}

After some simple computations we get that $v$ must solve 
\[
\Delta v+\frac{1}{2}x\cdot Dv=-\lambda v, \ \ \text{in } \R^N,
\]
which is an eigenvalue problem in the whole space. In this case one can search for radial solutions vanishing at infinity and get that the Gaussian profile 
$
v(x)=e^{-\frac{|x|^2}{4}}
$
solves the above eigenvalue problem with $\lambda= N/2$. 

The mentioned link among self-similar solutions for parabolic problems and eigenvalue problems in the Euclidean space has been addressed in the nonlinear setting as well. 
Consider a Lipschitz function
$F: \mathbb S^N \to \R$, positively $1$-homogeneous and  satisfying the uniform ellipticity condition
\begin{equation*}
\gamma \ \mathrm{Tr} (X - Y) \leq F(X) - F(Y) \leq \Gamma \ \mathrm{Tr}(X - Y), \quad \mbox{for all} \ X \geq Y,
\end{equation*}
for some $0 < \gamma \leq \Gamma < +\infty$ (these are the ellipticity constants).

In this setting,
Armstrong-Trokhimtchouk in \cite{AT}, and Meneses-Quaas in \cite{MQ} addressed the existence of an eigenpair $(\phi, \lambda) \in C(\R^N) \times \R_+$  with $\phi > 0$ of the problem
\begin{equation}\label{EqVPLo}
F(D^2\phi)+\frac{1}{2}x\cdot D \phi=-\lambda \phi \ \ \text{in }\R^N,
\end{equation}
extending to the fully nonlinear framework the notion of principal eigenvalue problems introduced in the classic work of Beresticky, Nirenberg and Varadhan in~\cite{BNV} (see also \cite{Hamel}).
In~\cite{AT, MQ}, the authors established Gaussian decay rates for properly normalized eigenfunction $\phi$ (say, $\phi(0) = 1$ or $\| \phi \|_\infty = 1$) of the form
\begin{align}\label{localdecay}
c e^{-\frac{|x|^2}{b}}\leq \phi \leq C e^{-\frac{|x|^2}{a}}, \quad x \in \R^N,
\end{align}
for some constants $c, C > 0$ and $0 < b \leq a$ just depending on the dimension $N$ and the ellipticity constants $\gamma, \Gamma$.


%


As in the classical case described above, eigenvalue problem~\eqref{EqVPLo} allows to obtain self-similar solutions for the fully nonlinear parabolic problem
\begin{align*}\label{EqParLo}
u_t-F(D^2u)=0 \quad \text{in }\R^N\times\R_+,
\end{align*}
and from here, qualitative properties of the solutions are obtained. 
We remark that, due to the nonlinear nature of the problem, there exists a second eigenpair $(\phi^-, \lambda^-) \in C(\R^N) \times \R_+$ solving~\eqref{EqVPLo}, with $\phi^- < 0$. 

\medskip

The previous discussion is the main motivation of this paper, where we aim to study of fractional principal eigenvalue problems in unbounded domains, and its subsequent relation to self-similar solution for fractional, fully nonlinear parabolic equations. 

We introduce the basic assumptions on the kernels defining the nonlocal operators. Let $s \in (0,1)$,  we say that $K: \R^N \to \R$ belongs to the class $\mathcal K_0$ if $K(y) = K(-y)$ for all $y$, and 
\begin{equation}\label{eee}
\frac{\gamma}{|y|^{N + 2s}} \leq K(y) \leq \frac{\Gamma}{|y|^{N + 2s}},
\end{equation} 
for some given constants $0 < \gamma \leq \Gamma < +\infty$. Given $K\in\mathcal K_0$, we denote
\begin{equation}\label{oplineal}
L(u,x) = L_K(u, x) = \mathrm{P.V.} \ \int_{\R^N} [u(y) - u(x)]K(x - y)dy.
\end{equation} 

For sets of indices $I, J$ (compact metric space), we consider a two parameter family of symmetric kernels $K_{ij}\in\mathcal K_0$ for all $i \in I, j \in J$, and introduce the nonlinear operator of Isaacs form
\begin{equation}\label{operador}
\I(u,x) = \inf_{i \in I} \ \sup_{j \in J} \ L_{ij} (u,x),
\end{equation} 
where $L_{ij} = L_{K_{ij}}$.





Our first main result is the following
\begin{teo}\label{teoex}
	Let $\Omega \subseteq \R^N$ (possibly unbounded), $s \in (1/2,1)$ and $\I$ a nonlocal operator given by~\eqref{operador}. Then for all 
	$b \in C(\Omega; \R^N)$, there exists an eigenpair $(\phi_1^+, \lambda_1^+)$ with $\phi_1^+ > 0$ in $\Omega$, solving the problem
	\begin{align}\label{eqintro}
	\I (\phi) + b \cdot D\phi = -\lambda \phi \ \text{in } \Omega; \qquad \phi = 0 \ \text{in } \Omega^c,
	\end{align}
	in the viscosity sense.
	
	The principal eigenvalue $\lambda_1^+ \geq 0$ is characterized by 
	\begin{align}\label{car}
	\lambda_1^+=\sup\{\lambda\left|\right.  \ \exists\phi>0 \ \text{in }\Omega,  \ \phi\geq0 \ \text{in }\R^N,  \ \text{s.t. } \I\phi + b \cdot D\phi \leq -\lambda \phi \ \text{in } \Omega\}.
	\end{align}
\end{teo}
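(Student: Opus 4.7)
The plan is to follow the Berestycki--Nirenberg--Varadhan strategy, adapted to the nonlocal setting, using the Harnack inequality established earlier in the paper as the essential tool for compactness and positivity. The proof proceeds in three stages: existence on bounded subdomains, passage to the limit along an exhaustion of $\Omega$, and verification of the sup--characterization.

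For the first stage I would fix an exhausting sequence of smooth bounded subdomains $D_n$ with $\bar D_n \subset D_{n+1} \subset \Omega$ and $\bigcup_n D_n = \Omega$. On each $D_n$ the Dirichlet eigenvalue problem for \eqref{eqintro} admits a principal eigenpair $(\phi_n, \lambda_n^+)$ with $\phi_n > 0$ in $D_n$ and $\phi_n = 0$ in $D_n^c$. This is obtained by applying a Krein--Rutman / topological degree argument to the resolvent map $f \mapsto u$ of the problem $\I u + b \cdot Du = -f$ in $D_n$ with zero exterior data; the needed compactness comes from the interior H\"older regularity available for Isaacs-type nonlocal equations (with $s > 1/2$ the drift $b \cdot Du$ is a lower-order perturbation of $\I$), while strict positivity and simplicity of $\phi_n$ follow from the Harnack inequality and the nonlocal strong maximum principle.

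Directly from the supremum characterization one obtains the monotonicity $\lambda_n^+ = \lambda_1^+(D_n) \geq \lambda_1^+(D_{n+1}) \geq 0$, so $\lambda_n^+ \to \lambda_\infty \geq 0$. Fix $x_0 \in D_1$ and normalize so that $\phi_n(x_0) = 1$. The Harnack inequality, applied on each fixed compact set $K \subset \Omega$ (which sits inside $D_n$ for all $n$ large), yields uniform $L^\infty$ bounds on $K$. Together with the H\"older estimates for solutions of~\eqref{eqintro}, a diagonal extraction produces a subsequence converging locally uniformly to a nonnegative function $\phi_\infty$ with $\phi_\infty(x_0) = 1$. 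Stability of viscosity solutions under local uniform convergence, with the nonlocal tails handled by dominated convergence using the uniform bounds and the integrability built into $\mathcal K_0$, shows that $(\phi_\infty, \lambda_\infty)$ solves~\eqref{eqintro} in the viscosity sense. A further application of Harnack to $\phi_\infty$ gives $\phi_\infty > 0$ throughout $\Omega$.

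Finally, $(\lambda_\infty, \phi_\infty)$ is itself a competitor in the supremum~\eqref{car} (with equality instead of $\leq$), hence $\lambda_\infty \leq \lambda_1^+$. Conversely, given any admissible pair $(\lambda, \phi)$ in~\eqref{car}, one compares $\phi$ to $\phi_n$ on $D_n$ via a nonlocal maximum principle applied to the ratio $\phi_n / \phi$, or equivalently via an ABP-type inequality, to conclude $\lambda \leq \lambda_n^+$; letting $n \to \infty$ gives $\lambda \leq \lambda_\infty$, hence equality. The main difficulty I expect to address carefully is ensuring that the normalization survives the limit --- that is, that $\phi_\infty$ neither blows up nor degenerates to zero on compact subsets of $\Omega$. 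This is precisely what the Harnack inequality with controlled dependence on the drift $b$ guarantees, which is the central technical contribution used here and is what permits the argument to go through in an arbitrary, possibly unbounded, $\Omega$ without a priori restrictions on $b$ beyond continuity.
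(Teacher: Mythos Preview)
Your proposal is correct and follows essentially the same route as the paper: exhaust $\Omega$ by smooth bounded domains, invoke the existing principal eigenvalue theory on each (the paper cites \cite{DQT} rather than sketching Krein--Rutman), use the monotonicity of $\lambda_n^+$ together with the Harnack inequality and interior $C^\alpha$ estimates to pass to a locally uniform limit, and conclude by the strong maximum principle and the sup-characterization. The only additional ingredient the paper makes explicit is a weighted $L^1$ bound (their Lemma~\ref{lema1}) feeding into the H\"older estimate (Lemma~\ref{lema3}) so that the nonlocal tails are controlled quantitatively, but this is exactly the point you allude to when you say the tails are ``handled by dominated convergence using the uniform bounds''.
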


An analogous existence result holds for the eigenpair associated to the negative eigenfunction. From now on we concentrate in the positive solution since all the results presented next can be written for the case of negative solution.

The proof of the previous theorem follows the ideas of \cite{Hamel, MQ}. The strategy is to consider a sequence of bounded domains $\Omega_n, \ n \in \N$ with smooth boundary such that $\Omega_n\nearrow\Omega$. In view of the results in~\cite{DQT}, there exists a sequence of solutions $(\phi_n, \lambda_n)$ of the eigenvalue problem~\eqref{eqintro} in $\Omega = \Omega_n$. Characterization~\eqref{car} holds for every $\Omega_n$ which leads to the boundedness of the family of eigenvalues $\lambda_n$. Elliptic regularity estimates lead to uniform bounds in $C^\alpha_{loc}(\Omega)$ for the family $\{ \phi_n \}$. The last ingredient to get a nontrivial limit for the sequence $\{ \phi_n \}$ is Harnack inequality for equations ``in resonance" form~\eqref{eqintro}. 

\medskip



Harnack inequality for elliptic nonlocal equations has been studied in different settings, from both the PDE and probability point of view, see for example (see for example \cite{Bass, Bass1, CS} ) and the references therein. We point out that all the previous references deal with operators without zero order terms. To the best of the author's knowledge the only known Harnack inequality for operators with zero order terms are for the special case of the fractional Laplacian, see \cite{Bogdan,CabreSire,CS1,Tan}. The spirit of the proof in \cite{CabreSire,CS1,Tan} relies on the extension work of \cite{CS1}, and the estimates for degenerate operators of Fabes, Kenig and Serapioni, see\cite{FKS}, while the work in \cite{Bogdan} is approached using probability tools. Note that the extension technique from \cite{CS1} does not apply for nonlinear operators.


The main result of this paper is the following.
\begin{teo}\label{Teoharnack}
	Let $\Omega \subset \R^N$ be a bounded domain, $s \in (1/2,1)$ and $\I$ a nonlocal operator given by~\eqref{operador}. Let $u\geq 0$ be a viscosity solution of
\begin{equation}\label{eqHarnack}
\I(u)+g(x,Du)+c(x)u=f(x), \ \ \text{in } \Omega,
\end{equation}
where $c,f \in L^\infty(\Omega)$ and $g$ satisfies
\[
|g(x,Du)|\leq M_1|Du|,
\]
for some $M_1$. Then for any $\tilde\Omega$ there exists a constant $C$ such that 
\[
\sup_{\tilde\Omega}u \leq C(\inf_ {\Omega}u+\|f\|_\infty).
\]
\end{teo}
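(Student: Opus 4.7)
The plan is to derive the Harnack inequality in two stages: first a Harnack inequality on small balls, in which the zero-order term is treated as a perturbation of the right-hand side; then an extension to $\tilde\Omega$ via a Harnack chain. The base ingredient is the Harnack inequality for nonlocal equations of the form $\I(u) + g(x,Du) = f$ (without zero-order term), which is available from the fractional Caffarelli--Silvestre machinery already invoked in \cite{DQT}: for $s > 1/2$, the drift $|g| \leq M_1 |Du|$ is a subcritical perturbation of $\I$ and is absorbed by the standard proof.

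For the small-ball step, I would rewrite the equation as $\I(u) + g(x,Du) = \tilde f$ with $\tilde f := f - cu$, and apply the above Harnack on a ball $B_r(x_0) \Subset \Omega$. Since $\|\tilde f\|_{L^\infty(B_r)} \leq \|f\|_\infty + \|c\|_\infty \sup_{B_r} u$, this gives the core estimate
\[
\sup_{B_{r/2}} u \;\leq\; C_0 \inf_{B_{r/2}} u + C_0\, r^{2s}\|f\|_\infty + C_0 \|c\|_\infty\, r^{2s} \sup_{B_r} u.
\]
Choosing $r \leq r_0$ so that $C_0 \|c\|_\infty\, r_0^{2s}$ is small (say $\leq 1/4$) makes the last term a small fraction of $\sup_{B_r} u$; however, that sup is on a strictly larger ball than the one on the left. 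A classical concentric-ball interpolation---apply the core estimate on a sequence of balls $B_{\rho_k}(x_0)$ with $\rho_k = \tfrac{r_0}{2}(1 + 2^{-k})$ and sum the resulting geometric series---absorbs this term and yields the local Harnack
\[
\sup_{B_{r_0/4}(x_0)} u \;\leq\; C\bigl(\inf_{B_{r_0/4}(x_0)} u + \|f\|_\infty\bigr),
\]
with $r_0$ and $C$ depending on $N,s,\gamma,\Gamma,M_1,\|c\|_\infty$, valid for every $x_0$ with $B_{r_0}(x_0) \subset \Omega$.

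To pass from small balls to $\tilde\Omega$, I would take $r_0 \leq \mathrm{dist}(\tilde\Omega,\partial\Omega)/2$, cover $\tilde\Omega$ by finitely many balls of radius $r_0/4$, and connect any two points of $\tilde\Omega$ by a chain of overlapping such balls. Iterating the local Harnack along the chain yields the global inequality with a constant that additionally depends on $\tilde\Omega$ and $\Omega$.

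The main obstacle I expect is the absorption step, since the sup on the right of the core estimate lives on a strictly larger ball than on the left. The concentric-ball interpolation is the natural remedy, but one must simultaneously control the nonlocal ``tail'' contributions appearing whenever $\I$ is evaluated on a function restricted to a ball smaller than $\R^N$. Since $u \geq 0$ globally and viscosity solutions admit standard tail bounds, these contributions can be folded into $\|f\|_\infty$ or $\inf u$ once $r_0$ is small enough, closing the argument.
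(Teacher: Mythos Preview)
Your approach is genuinely different from the paper's, and unfortunately it has a real gap at the absorption step.

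The core estimate you write,
\[
\sup_{B_{r/2}} u \;\leq\; C_0 \inf_{B_{r/2}} u + C_0\, r^{2s}\|f\|_\infty + C_0 \|c\|_\infty\, r^{2s} \sup_{B_r} u,
\]
is correct, but the ``small'' term $C_0\|c\|_\infty r^{2s}\,\sup_{B_r}u$ lives on a \emph{strictly larger} ball than the sup on the left, and this circularity does not close at the level of the full Harnack used as a black box. The Giaquinta-type iteration you invoke needs an inequality $\phi(\rho)\le \theta\,\phi(R)+A$ valid for \emph{all} $\rho<R$ with a \emph{fixed} $\theta<1$. The base nonlocal Harnack of \cite{CS,CD} is only available for a fixed ratio of balls; passing to ratios close to $1$ by chaining makes the Harnack constant $C_0$---and with it your would-be $\theta$---blow up. If instead you iterate the fixed-ratio estimate inward, $S(\rho_{k+1})\le C_0 I(\rho_{k+1})+\epsilon S(\rho_k)+A$, the limit gives $u(x_0)\le \tfrac{C_0}{1-\epsilon}u(x_0)+\tfrac{A}{1-\epsilon}$, which is vacuous since $C_0>1$; iterating outward terminates at $\sup_\Omega u$, which you cannot bound independently of $u$. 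The tail remarks at the end do not address this; the obstruction is purely the mismatch of balls, not nonlocality.

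This is precisely why the paper does \emph{not} treat $c(x)u$ as a perturbation of the right-hand side. It reopens the Caffarelli--Silvestre argument. The $L^\varepsilon$ estimate (Lemma~\ref{epsilonlemma}) is first shown to hold \emph{with} the zero-order term, because at the ABP level the absorption works (the same ball appears on both sides). For the second half---the touching/blow-up step---the paper kills the zero-order term by dividing $u$ by a positive eigenfunction $\varphi_\delta$ of $\cM^+ +M_1|D\cdot|$ on a small ball, chosen so that its eigenvalue satisfies $\lambda_1>2M_2$. The quotient $u/\varphi_\delta$ then solves an inequality with \emph{no} zero-order term but with an extra nonlocal bilinear drift $\cB(\varphi_\delta,\cdot)$, to which the $L^\varepsilon$ machinery applies. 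In short, the zero-order term has to be eliminated \emph{inside} the proof, not absorbed after the fact.
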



As far as we know, this is the first Harnack inequality for fully nonlinear nonlocal operators with zero order terms, and we believe that this result is of independent interest, since it has several applications to different problems. The proof is based on the proof by Caffarelli and Silvestre for the fully nonlinear case without zero order terms, see \cite{CS}. The idea, as in the local case, is to consider an auxiliary function that solves an equation without the zero order term. In the local case this involves studying an equation with an extra transport term, meanwhile in the nonlocal case this translates into a nonlocal transport term, see Section \ref{SecHar} for more details.

\medskip

Coming back to the initial motivation about self-similarity in fractional parabolic problems, we briefly describe the known results for the linear 
fractional Heat equation, namely
\[
P_t - \Delta^sP=0 \ \ \text{in }\R^N\times\R_+,
\]
where $\Delta^s$ denotes the fractional Laplacian of order $2s$.

Fundamental solution associated to this problem has the self-similar form $P(x,t)=t^{-\frac{N}{2s}} \tilde \phi(|x|t^{-\frac{1}{2s}})$ and moreover one has the estimates 
\begin{equation}\label{cotafrac}
\frac{c_1 t}{(t^{1/s}+|x|^2))^{(N+2s)/2}}\leq P(x,t)\leq \frac{c_2 t}{(t^{1/s}+|x|^2))^{(N+2s)/2}}
\end{equation}
for some $c_2>c_1>0$, see  \cite{Blu, BPSV}. Therefore it is easily seen that the associated eigenpair $(\tilde \phi, \frac{N}{2s})$ solves the problem
\[
\Delta^s \tilde \phi+\frac{1}{2s}x\cdot D \tilde \phi=-\frac{N}{2s} \tilde \phi \quad \mbox{in} \ \R^N,
\]
and $\tilde \phi$ has polynomial decay of the form $|x|^{-N-2s}$ as $|x| \to \infty$. This is in striking difference with the exponential decay of the second-order version of the problem. This is a consequence of the decay at infinity of the kernel defining $\Delta^s$ (that is, the function $y \mapsto |y|^{-(N + 2s)}$), see Section~\ref{SecBarriers}.

We aim to get further properties of the eigenpair found in Theorem~\ref{teoex} in the case $\Omega = \R^N$ such as lower and upper bounds for the principal eigenvalue, and decay rates for the eigenfunction. We restrict ourselves 
to the case in which the nonlinear operator $\I$ is an extremal Pucci type operator, that is when $\I = \MM^-$ or $\I = \MM^+$ with
\begin{equation}\label{Pucci}
\MM^-(u,x) = \inf_{i \in I} L_i(u,x); \qquad \MM^+ (u,x) = \sup_{i \in I} L_i (u,x),
\end{equation}
where the inf and sup is taken over linear operators $L_i=L_{K_i}$ with $K_i\in\mathcal K_0$.

In what follows, singular fundamental solutions associated to the extremal operators $\MM^{\pm}$ play a key role. As it can be seen in~\cite{FQ}, there exist \textsl{dimension like numbers} $\tilde N^{\pm} > 0$ (depending on the ellipticity constants) such that the function $|x|^{2s - \tilde N^{\pm}}$ is a fundamental solution associated to $\M^{\pm}$, see~\eqref{fundamental}. It is known that $\tilde N^- \geq  N$ and therefore,  if $N \geq 2$, the fundamental solution of $\M^-$ is singular. On the other hand, inequality $\tilde N^+ > 2s$ requires further assumptions on $\gamma, \Gamma$ in order to hold. 

Note that in the linear case, and in particular the fractional Laplacian,  $\tilde N=N$ and so the the next result recover the bounds \eqref{cotafrac}.

\begin{teo}\label{mainTheo}
Assume $N \geq 2$. Let $s \in (1/2, 1)$, and $\M$ be either $\M^+$ or $\M^-$. Consider the eigenpair $(\phi_1^+, \lambda_1^+)$ solving 
\begin{align}\label{eqteo}
\M \phi +\frac{1}{2s}x\cdot D\phi=-\lambda \phi \ \ \text{in }\R^N,
\end{align}
with $\phi_1^+ > 0$ in $\R^N$. Assume that $\tilde N^+ > 2s$ in the case $\M = \M^+$.

Then, under the normalization $\phi_1^+(0) = 1$, there exist $c, C,\delta >0$ depending on the ellipticity constants, $N$ and $s$ such that  
\begin{align}\label{cotavp}
0<\frac{\tilde N-2s}{2s}\leq \lambda_1^+\leq \frac{N+2s-\delta}{2s},
\end{align}
and
\begin{align}\label{nondecay}
c|x|^{-(N+2s)}\leq \phi_1^+\leq C|x|^{-(N+2s)}, \quad \mbox{for} \ x \neq 0.
\end{align}

In addition, 
$\lambda_1^+$ is simple in the sense that if $(\phi, \lambda_1^+)$ is an eigenpair associated to the problem, then $\phi = t\phi_1^+$ for some $t \geq 0$; and unique, in the sense that if $(\lambda, \phi)$ solves~\eqref{eqteo} with $\phi > 0$, then $\lambda = \lambda_1^+$. As a consequence, $\phi_1^+$ is radially symmetric.
\end{teo}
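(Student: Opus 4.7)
The existence of an eigenpair $(\phi_1^+, \lambda_1^+)$ with $\phi_1^+ > 0$ in $\R^N$ follows from Theorem~\ref{teoex} applied to $\Omega = \R^N$ with drift $b(x) = \frac{1}{2s}x$ (after a minor adaptation to handle the fact that $b$ is unbounded though locally bounded, so all arguments localize). The remaining work is to establish the polynomial decay~\eqref{nondecay}, the two-sided bound~\eqref{cotavp}, and then simplicity, uniqueness and radial symmetry. The guiding principle is that $\M^{\pm}(|x|^{-\alpha})$ is homogeneous of degree $-\alpha - 2s$ by scaling, so the nonlocal term becomes a lower-order perturbation of the drift at infinity, forcing the eigenfunction to decay exactly at the rate of the kernel tails, $|x|^{-(N+2s)}$.

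For the decay estimates, the upper bound is obtained by constructing the barrier $\Psi(x) = A(1 + |x|^2)^{-(N+2s)/2}$. A direct computation yields $\frac{1}{2s} x \cdot D\Psi = -\frac{N+2s}{2s} \Psi \cdot \frac{|x|^2}{1+|x|^2}$, while $\M \Psi = O(|x|^{-(N+4s)})$ by homogeneity. Thus for any $\mu \in (\lambda_1^+, (N+2s)/(2s))$ and $|x|$ large enough, $\M\Psi + \frac{1}{2s}x \cdot D\Psi + \mu \Psi \leq 0$; choosing $A$ so that $A\Psi \geq \phi_1^+$ on the relevant ball and invoking the comparison principle yields $\phi_1^+ \leq C|x|^{-(N+2s)}$. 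For the lower bound, Theorem~\ref{Teoharnack} applied on $B_2$ gives $\phi_1^+ \geq c_0$ on $B_1$, and for $|x|$ large the equation forces
\[
\M\phi_1^+(x) = -\lambda_1^+ \phi_1^+(x) - \tfrac{1}{2s} x \cdot D\phi_1^+(x) \geq c_1 |x|^{-(N+2s)} - C\phi_1^+(x),
\]
using that the minimal admissible kernel $\gamma|y|^{-(N+2s)}$ bounds the action of any $L_i$ from below on the region where $\phi_1^+$ is bounded below; combined with interior $C^{1,\alpha}$ bounds to absorb $x \cdot D\phi_1^+(x)$, this gives $\phi_1^+(x) \geq c|x|^{-(N+2s)}$.

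For the eigenvalue bounds, the lower estimate $\lambda_1^+ \geq (\tilde N - 2s)/(2s)$ comes from the radial fundamental solution $|x|^{2s - \tilde N}$ of $\M^{\pm}$ supplied by~\cite{FQ}. After regularizing near the origin to a positive $\psi \in C(\R^N)$ that coincides with $|x|^{-(\tilde N - 2s)}$ for $|x| \geq 1$, one verifies $\M\psi + \frac{1}{2s}x \cdot D\psi + \frac{\tilde N - 2s}{2s}\psi \leq 0$ (the drift contributes $-\frac{\tilde N - 2s}{2s}\psi$ and $\M\psi = 0$ away from a compact set), and characterization~\eqref{car} delivers $\lambda_1^+ \geq (\tilde N - 2s)/(2s)$. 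The upper bound $\lambda_1^+ \leq (N + 2s - \delta)/(2s)$ is obtained by evaluating the equation at $x = 0$, where the drift vanishes, so that $\lambda_1^+ = -\M\phi_1^+(0)$; the decay $\phi_1^+(y) \leq C(1+|y|)^{-(N+2s)}$ together with $\phi_1^+(0)=1$ and the strict positivity of the admissible kernels produces a strictly positive gap $\delta > 0$ depending only on $\gamma, \Gamma, N, s$.

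Simplicity follows from the classical touch argument: for any other positive eigenfunction $\phi$ with eigenvalue $\lambda_1^+$, set $t_* = \inf\{t > 0 : t\phi_1^+ \geq \phi\}$, which is finite thanks to the matched decay rates; then $w := t_*\phi_1^+ - \phi \geq 0$ satisfies the same linear inequalities and touches $0$ somewhere, so the strong maximum principle inherited from Theorem~\ref{Teoharnack} forces $w \equiv 0$. Uniqueness among positive eigenpairs $(\lambda, \phi)$ follows from characterization~\eqref{car} together with a symmetric touching argument. Rotational invariance then implies that for every $R \in O(N)$, $\phi_1^+ \circ R$ is a positive eigenfunction with eigenvalue $\lambda_1^+$ and $(\phi_1^+ \circ R)(0) = 1$, so uniqueness forces $\phi_1^+ \circ R = \phi_1^+$. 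The main obstacle I anticipate is the strict upper bound $\lambda_1^+ < (N+2s)/(2s)$: natural ansatze of the form $|x|^{-(N+2s)}$ yield only the non-strict estimate, and extracting the quantitative gap $\delta > 0$ requires a careful exploitation of the strict negativity of the extremal Pucci operator acting on the precise profile of $\phi_1^+$ at the origin.
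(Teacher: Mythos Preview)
Your guiding principle — that $\M(|x|^{-\alpha})$ is homogeneous of degree $-\alpha-2s$, so the nonlocal term is lower order than the drift — is false precisely at the exponent $\alpha=N+2s$ that matters here. Scaling gives the homogeneity $L_K(|\cdot|^{-\alpha})(x)=c|x|^{-\alpha-2s}$ only when $\alpha<N$, so that $|y|^{-\alpha}$ is locally integrable near the origin. For your barrier $\Psi(x)=(1+|x|^2)^{-(N+2s)/2}$, the regularized core carries finite positive mass; for large $|x|$ and any admissible kernel,
\[
L_K\Psi(x)\ \geq\ \int_{B_{|x|/2}}\Psi(y)\,K(x-y)\,dy\ -\ C\Psi(x)|x|^{-2s}\ \geq\ c\,|x|^{-(N+2s)}\int_{B_1}\Psi\ -\ C|x|^{-(N+4s)},
\]
so $\M\Psi(x)\sim c\,\Psi(x)$ at infinity, not $O(|x|^{-(N+4s)})$. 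In other words, the tail of the kernel ``sees'' the mass of $\Psi$ near the origin and produces a zero-order term of the same size as $\Psi$ itself. This is exactly the content of the paper's Lemma~\ref{lemavarphi}, and it means that $(1+|x|^2)^{-(N+2s)/2}$ is \emph{not} a supersolution for the eigenvalue equation with any $\mu$ above $\lambda_1^+$; your upper decay bound collapses, and with it the argument for the upper bound on $\lambda_1^+$ via evaluation at the origin (which also presupposes the upper decay).

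The paper handles this obstruction by building barriers that are \emph{singular} at the origin. The supersolution (Lemma~\ref{lemasuper}) is $\Phi=\min\{cE,|x|^{-(N+2s)}\}$ with $E(x)=|x|^{2s-\tilde N}$ the fundamental solution of $\M^+$; near the origin $\Phi=cE$ so $\M^+\Phi\leq 0$ exactly, and the core contribution at infinity is $O(c^{\theta}\Phi)$, which is made small by choosing $c$ small. Dually, Corollary~\ref{corvarphi} shows that $\varphi_{M,\beta}$ with $\beta>N+2s$ is a global subsolution with right-hand side $-\frac{\beta-c}{2s}\varphi$; a touching argument against $\phi_1^+$ (which is possible only after the lower decay~\eqref{lowdecay} is established) yields the strict upper bound $\lambda_1^+\leq\frac{N+2s-\delta}{2s}$. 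The order of the steps in the paper — lower bound for $\lambda_1^+$ via a punctured-domain limit and a removable-singularity argument, then lower decay, then upper bound for $\lambda_1^+$, then upper decay — is not incidental: each step is needed to justify the touching comparison in the next. Your lower decay argument is also incomplete as written: the displayed inequality rearranges the equation but does not explain how a lower bound on $\M\phi_1^+(x)$ (which indeed comes from the core mass) translates into a lower bound on $\phi_1^+(x)$ without already controlling $x\cdot D\phi_1^+$; the paper obtains it instead by a contradiction/touching argument against the subsolution $\varphi_M+C_0\eta-\epsilon|x|^{\sigma'}$.
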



Note that the bounds \eqref{nondecay} are the nonlocal equivalent of \eqref{localdecay}. To the best of the authors knowledge this is the first result regarding decay and bounds for the eigenvalue for nonlinear nonlocal operators. In what respects to~\eqref{nondecay}, this is obtained by carefully constructing global sub and super solutions for the eigenvalue problem. The construction needs two key ingredients: the refinement for fully nonlinear operators of the estimates found by Bonforte-V\'azquez in \cite{BV}; and an approximation argument for punctured domains in order to contruct barriers through the singular fundamental solution. In what respects to~\eqref{cotavp}, the estimates are related to the mentioned barriers and the characterization~\eqref{car}.

We finish the introduction with the following corollary, which is a direct consequence of the preceding theorem, see~\cite{AT, MQ}. 
\begin{cor}
Under the assumptions of Theorem~\ref{mainTheo}, $\lambda_1^+$ is the unique constant for which the parabolic problem
\begin{equation*}
u_t = \MM^+ u \quad \mbox{in} \ \R^N \times \R_+,
\end{equation*}
possess a self-similar solution $\Phi^+$ satisfying
\begin{equation*}
\Phi^+(x,t) = c^{\lambda_1^+} \Phi(c^{\frac{1}{2s}} x, ct) \quad \mbox{for all} \ x \in \R^N, \ t, c > 0.
\end{equation*}

This function is defined as $\Phi^+(x,t) = t^{-\lambda_1^+} \phi_1^+(x t^{-\frac{1}{2s}})$, where $(\phi_1^+, \lambda_1^+)$ is given in Theorem~\ref{mainTheo}. Moreover, $\Phi^+$ is unique up to a multiplicative positive constant.
\end{cor}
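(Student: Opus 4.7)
The plan is to reduce the parabolic problem for self-similar profiles to the stationary eigenvalue equation~\eqref{eqteo} and then invoke the uniqueness/simplicity statements in Theorem~\ref{mainTheo}.

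First I would verify the ansatz. Set $\Phi^+(x,t) = t^{-\lambda_1^+} \phi_1^+(xt^{-1/(2s)})$ and let $y = xt^{-1/(2s)}$. A direct computation gives
\begin{equation*}
\partial_t \Phi^+(x,t) = t^{-\lambda_1^+ - 1}\Bigl(-\lambda_1^+ \phi_1^+(y) - \tfrac{1}{2s}\, y \cdot D\phi_1^+(y)\Bigr).
\end{equation*}
Using that each kernel $K_i\in\mathcal K_0$ defining $\M^+$ satisfies $K_i(az) = a^{-(N+2s)} K_i(z)$, the operator $\M^+$ is $2s$-homogeneous under the rescaling $u(\cdot)\mapsto u(\cdot/a)$, so at fixed $t$, with $a = t^{1/(2s)}$,
\begin{equation*}
\M^+ \Phi^+(x,t) = t^{-\lambda_1^+}\, t^{-1}\, \M^+ \phi_1^+(y) = t^{-\lambda_1^+-1}\, \M^+ \phi_1^+(y).
\end{equation*}
Hence $\partial_t \Phi^+ = \M^+ \Phi^+$ is equivalent to $\M^+\phi_1^+ + \tfrac{1}{2s} y\cdot D\phi_1^+ = -\lambda_1^+ \phi_1^+$, which holds by Theorem~\ref{mainTheo}. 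The self-similarity identity $\Phi^+(x,t) = c^{\lambda_1^+}\Phi^+(c^{1/(2s)}x, ct)$ is then immediate from the explicit form of $\Phi^+$.

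For uniqueness of the exponent, suppose $u > 0$ is any self-similar solution of $u_t = \M^+ u$ satisfying $u(x,t) = c^{\lambda} u(c^{1/(2s)}x, ct)$ for some $\lambda \in \R$. Setting $c = 1/t$ yields $u(x,t) = t^{-\lambda} \psi(xt^{-1/(2s)})$ with $\psi(y) := u(y,1) > 0$. The same scaling computation as above shows that $\psi$ is a positive viscosity solution of
\begin{equation*}
\M^+ \psi + \tfrac{1}{2s} y\cdot D\psi = -\lambda \psi \quad \text{in } \R^N.
\end{equation*}
The uniqueness clause in Theorem~\ref{mainTheo} forces $\lambda = \lambda_1^+$, and then simplicity yields $\psi = t\,\phi_1^+$ for some $t > 0$, i.e.\ $u = t\,\Phi^+$. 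This proves that $\lambda_1^+$ is the only admissible exponent and that $\Phi^+$ is unique up to a positive multiplicative constant.

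There is no real obstacle here beyond the scaling computation; the whole content has been absorbed into Theorem~\ref{mainTheo}, where the delicate simplicity/uniqueness assertion for the principal eigenvalue in $\R^N$ was established. The only point requiring minimal care is to justify that the scaling identity for $\M^+$ passes through the viscosity notion, which is standard since $\M^+$ is $2s$-homogeneous and translation invariant, so that viscosity test functions for $\Phi^+$ and for the profile $\phi_1^+$ correspond via the same change of variables.
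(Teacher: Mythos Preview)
Your argument is correct and is precisely the elaboration the paper has in mind: the paper does not actually prove this corollary, stating only that it is ``a direct consequence of the preceding theorem, see~\cite{AT, MQ}.'' One small correction worth making: individual kernels $K\in\mathcal K_0$ need not be homogeneous, so the line ``each kernel $K_i\in\mathcal K_0$ defining $\M^+$ satisfies $K_i(az)=a^{-(N+2s)}K_i(z)$'' is not accurate as written. The $2s$-homogeneity of $\M^+$ that you need comes instead from the scale invariance of the \emph{class} $\mathcal K_0$: if $K\in\mathcal K_0$ then $y\mapsto a^{N+2s}K(ay)$ also belongs to $\mathcal K_0$, so the supremum defining $\M^+$ is unchanged under the rescaling and one obtains $\M^+\bigl(u(\cdot/a)\bigr)(x)=a^{-2s}(\M^+u)(x/a)$. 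With this adjustment your computation and the subsequent appeal to uniqueness and simplicity in Theorem~\ref{mainTheo} go through unchanged.
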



%
%

The paper is organized as follows. In Section \ref{SecPre} we define the class of operators we are working with and recall the notion of viscosity solution. In \ref{SecHar} we prove the Harnack inequality for fully nonlinear nonlocal operators with zero order terms. In Section \ref{SecVP} we prove the existence of a general eigenpair for general domains. Section \ref{SecBarriers} is dedicated to the construction of global sub and super solution for \eqref{eqteo}. Finally in Section \ref{SecDecay} we prove Theorem \ref{mainTheo}.

\medskip

\section{Preliminaries, notation and notion of solution}\label{SecPre}

We recall the definition of linear nonlocal operators $L$ as in~\eqref{oplineal}, nonlinear nonlocal operators $\I$ of Isaacs form as in~\eqref{operador} and the maximal operators in~\eqref{Pucci}.

For a measurable set $A \subseteq \R^N$, we denote
\begin{equation*}
L[A](u,x) = \int_{A} [u(y) - u(x)]K(x-y)dy,
\end{equation*}
and similar notation for nonlinear operators.

For $K\in\mathcal K_0$ and $u, v$ regular, bounded functions, we denote
\[
\cB_K(u,v)(x) =\frac{1}{2}\int_{\R^N}(u(x)-u(y))(v(x)-v(y))K(y-x)dy,
\]
and its associated extremal operators
\[
\cB^+(u,v)=\sup_K \cB_K(u,v) \quad \mbox{and} \quad \cB^-(u,v)=\sup_K \cB_K(u,v), 
\] 
This operator plays the role of the gradient, more precisely for smooth functions $u$ and $v$ it is easy to verify that $\cB(u,v)\to Du\cdot Dv$ and appears naturally in Section \ref{SecHar}. 

For $b:\R^N\to\R^N$ and $\kappa, \ c:\R^N\to\R$ bounded functions, we denote the Pucci extremal operators for this class
\[
\cM_{\cB}^{+}u(x)= \cM ^+u(x)+\kappa(x)\cB^+(u,\zeta)+b(x)\cdot Du+c(x)u, 
\]

In an analogous way we define $\cM_{\cB}^{-}$.

We say that $u: \R^N \to \R$ upper semicontinuous is a viscosity subsolution to the problem
$$
\I (u) + b \cdot Du + cu = -\lambda u, 
$$ 
at a point $x_0 \in \R^N$ if for each $\varphi \in C^2(B_\rho(x_0)) \cap L^\infty(\R^N)$ such that $x_0$ is a maximum point for $u - \varphi$ in $B_\rho(x_0)$, then 
\begin{equation*}
\I[B_\rho(x_0)](\varphi, x_0) + \I[B_\rho(x_0)^c](u, x_0) + b(x_0) \cdot D\varphi(x_0) + c(x_0) u(x_0) \geq -\lambda u(x_0).
\end{equation*}

Viscosity supersolution is defined in an analogous way.

\section{Harnack Inequality}\label{SecHar}

A key ingredient in the proof of the Harnack inequality is the $L^\varepsilon$ Lemma.
\begin{lema}\label{epsilonlemma}
Let $u\geq 0$ in $\R^N$ and suppose that $\cM_{\cB}^- u \leq C_0$ in $B_{2r}$ in the viscosity sense. Assume that $s>1/2$, then there exists a universal constant $C$ such that
\begin{align}\label{epsilonestimate}
|\{u>t\}\cap B_r|\leq Cr^n(u(0)+C_0r^{2s})^\varepsilon t^{-\varepsilon}.
\end{align}
\end{lema}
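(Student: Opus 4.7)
The plan is to follow the Caffarelli--Silvestre proof of the $L^\varepsilon$ estimate for fully nonlinear nonlocal equations, adapting it to absorb the additional first- and zero-order terms carried by $\cM_\cB^-$. By the standard rescaling $\tilde u(x) = u(rx)/(u(0)+C_0 r^{2s})$, which preserves the class of inequalities because $\cM^-$ is scale-covariant of order $2s$ and, since $s>1/2$, the contributions from the drift $b\cdot Du$, the potential $cu$ and the bilinear term $\kappa\cB^-(u,\zeta)$ rescale favorably and can be absorbed into the right-hand side, it suffices to establish the estimate in the normalized case $r=1$, $u(0)\leq 1$, $C_0 \leq 1$.

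The core step is a point-to-measure estimate: under the normalization above, there exist universal $M>1$ and $\mu\in(0,1)$ such that
\[
|\{u\leq M\}\cap B_1| \;\geq\; \mu\,|B_1|.
\]
To prove this I would construct a nonnegative, radially symmetric $C^{1,1}$ bump $\phi$ supported in $B_4$ with $\phi \geq 2$ on $\overline{B_1}$ and satisfying a pointwise bound $\cM^+\phi \leq C_1\,\chi_{B_{1/4}}$ on $\R^N$ (essentially the Caffarelli--Silvestre special function, for instance a smoothed version of $\min(|x|^{-p},2^p)$ with $p$ chosen large enough). Setting $w=\phi-u$, one applies the nonlocal ABP estimate to $(w)_+$ at its maximum point: the viscosity inequality $\cM_\cB^- u\leq 1$ combined with the bound on $\cM^+\phi$ and the control on the extra lower-order terms at a smooth contact point forces a set of positive measure inside $B_{1/4}$ on which $u$ is bounded by a universal constant $M$.

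The point estimate is then upgraded to the full distributional bound by a dyadic Calder\'on--Zygmund decomposition in the Krylov--Safonov tradition, as in Section~9 of Caffarelli--Silvestre: a stacking lemma propagates the measure information from dyadic cubes to their predecessors, yielding the geometric decay $|\{u>M^k\}\cap B_1|\leq (1-\mu)^k$, which after relabeling gives \eqref{epsilonestimate} with $\varepsilon = -\log(1-\mu)/\log M$. The main obstacle is verifying that the extra terms in $\cM_\cB^-$ do not spoil the barrier step: at a viscosity contact point between $u$ and a $C^{1,1}$ touching paraboloid, one has pointwise control of $Du$, and of $\cB^-(u,\zeta)$ because $\zeta$ is smooth and the condition $s>1/2$ makes the integral defining $\cB^-$ absolutely convergent with a bound depending only on $\|u\|_{L^\infty(B_2)}$ at the contact point; hence each of these additional contributions is bounded by universal constants times $u$ plus $\|u\|_{L^\infty}$-type quantities that can be absorbed into $C_0$ after the initial rescaling. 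Once this absorption is carried out, the remainder of the argument is structurally identical to the zero-order-term-free case treated in \cite{CS}.
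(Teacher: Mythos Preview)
Your proposal is correct and takes essentially the same approach as the paper, which in fact gives only a two-line argument: since $\zeta$ is smooth, $\cB(u,\zeta)$ is dominated by an extremal operator of order $\tau=2s-1<1$, placing the inequality directly in the framework of \cite{CD}, and the remaining gradient and zero-order terms are absorbed by rescaling using $s>1/2$, exactly as you outline. Your sketch simply unpacks the Caffarelli--Silvestre barrier-plus-Calder\'on--Zygmund argument that underlies \cite{CD}.
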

Note that when $c\geq 0$ the previous lemma is a direct adaptation of the one proven in \cite{CD}, since $\cB$ can be bounded by an extremal operator of order $\tau=2s-1$ and the local gradient can be absorbed by scaling. 

The general case also follows by rescaling in the ABP estimates and the first step of the proof of the $L^\varepsilon$.

Next we state and prove a Harnack inequality for operators with general lower order terms.

\begin{teo}\label{Teohar}
Let $u\geq 0$ be a continuous function in $\R^N$ and suppose that 
\[
\cM^-u(x)-M_1|Du|-M_2u\leq C_0,  \ \ \cM^+u(x)+M_1|Du|+M_2u\geq -C_0 \ \ \text{in } B_2,
\]
in the viscosity sense. Assume that $s>1/2$, then there exists a universal constant $C$ such that 
\[
u(x)\leq C(u(0)+C_0) \ \ x\in B_{1/2}.
\]
\end{teo}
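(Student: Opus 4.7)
The plan is to adapt the Caffarelli-Silvestre proof of the Harnack inequality from \cite{CS} to accommodate the zero- and first-order terms. The two main ingredients are the $L^\varepsilon$-decay of Lemma \ref{epsilonlemma}, which is already stated in the $\cM_\cB$ class that covers such terms, and a pointwise ``point estimate'' that upgrades measure-level control into a sup bound.

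First I would perform the standard rescaling $u_r(x)=u(rx)/[u(0)+C_0]$, under which $\cM^\pm$ scales by $r^{2s}$ while the coefficients of $|Du|$ and $u$ scale by $r^{2s-1}$ and $r^{2s}$, respectively. Since $s>1/2$, all lower-order coefficients remain bounded (and in fact become small) for $r\leq 1$, reducing matters to proving $u\leq C$ in $B_{1/2}$ under the normalization $u(0)+C_0\leq 1$. Writing the viscosity inequality $\cM^- u - M_1|Du| - M_2 u \leq C_0$ as $\cM^- u + b\cdot Du + cu \leq C_0$ with $|b|\leq M_1$ and $c=-M_2$, and taking $\kappa\equiv 0$ in the definition of $\cM_\cB^-$, Lemma \ref{epsilonlemma} applies and yields
\begin{equation*}
|\{u>t\}\cap B_r|\leq C r^N t^{-\varepsilon}
\end{equation*}
for $r$ in a universal range.

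The core work is then the point estimate. Arguing by contradiction, suppose $u(x_0)\geq N$ for some $x_0\in B_{1/2}$ with $N$ very large. Following \cite{CS}, I would test the supersolution inequality $\cM^+ u+M_1|Du|+M_2 u\geq -C_0$ against a nonnegative barrier supported near $x_0$ to conclude that $\{u\geq \theta N\}$ has positive relative measure inside a smaller ball $B_{r_N}(x_0)$. The new difficulty compared with the zero-order-free case of \cite{CS} is controlling the additional $M_1|Du|$ and $M_2 u$ contributions in the barrier computation. The auxiliary-function strategy hinted at in the introduction is to pass from $u$ to $w=\zeta u$, for a smooth, positive cutoff $\zeta$; the elementary product identity
\begin{equation*}
L_K(\zeta u,x)=\zeta(x)L_K(u,x)+u(x)L_K(\zeta,x)+2\,\cB_K(u,\zeta)(x)
\end{equation*}
shows that the zero-order contribution in the equation for $w$ is replaced by the nonlocal transport term $\cB_K(u,\zeta)$, of effective order $2s-1<2s$ because $\zeta$ is smooth. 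Thus $w$ sits precisely in the $\cM_\cB$ class, and the barrier and covering arguments of \cite{CS} can be reproduced on $w$ with only quantitative bookkeeping changes.

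Combining the point estimate with the $L^\varepsilon$-decay in the standard iterative manner of \cite{CS} then produces a sequence $x_k\to x^*$ with $u(x_k)\to\infty$, contradicting the local boundedness of $u$ and establishing the desired bound $u\leq C(u(0)+C_0)$ in $B_{1/2}$. The main obstacle is the barrier construction described in the third paragraph: the auxiliary function must be chosen so that the gradient and zero-order contributions are absorbed into the $\cM_\cB$-framework without destroying the quantitative estimates of \cite{CS}. The assumption $s>1/2$ is used repeatedly to ensure that $|Du|$ and $\cB$ are of order strictly less than that of $\cM^\pm$ and can thus be treated as subcritical perturbations under both the scaling reduction and the barrier comparison.
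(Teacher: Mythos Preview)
Your plan correctly isolates the two ingredients --- the $L^\varepsilon$ lemma in the $\cM_\cB$ class and a Caffarelli--Silvestre touching/point estimate --- but the mechanism you propose for absorbing the zero-order term has a genuine gap. Multiplying by a smooth cutoff, $w=\zeta u$, does \emph{not} remove $M_2 u$: the product identity you wrote only creates the additional terms $u\,L_K\zeta$ and $2\cB_K(u,\zeta)$, while the original zero-order contribution survives intact as $M_2\zeta u=M_2 w$. In the CS argument one must apply the $L^\varepsilon$ lemma to (the positive part of) $v=C-u/\!$const near the touching point $x_0$, where $u$ is of order $t$ large; the supersolution inequality then yields a right-hand side of size $C_0+M_2u(x_0)\sim M_2 t$, and this extra factor of $t$ destroys the quantitative balance that is supposed to force $t$ to be bounded. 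A generic ``smooth, positive cutoff'' does nothing to cure this.

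The paper's device is to \emph{divide} rather than multiply, and by a specific function. One takes $(\varphi,\lambda_1)$ the positive eigenpair of $\cM^+\varphi+M_1|D\varphi|=-\lambda_1\varphi$ on a small ball $B_{2l}$, with $l$ chosen (via ABP) so that $\lambda_1>2M_2$, sets $\varphi_\delta=\varphi+\delta$, and runs the CS argument on the quotient $V=u/\varphi_\delta$. Writing $u=V\varphi_\delta$ and applying the product rule in the viscosity sense produces for $V$ an equation with the nonlocal drift $2\varphi_\delta^{-1}\cB(\varphi_\delta,V)$, but whose effective zero-order coefficient is $M_2-\lambda_1+\lambda_1\delta/\varphi_\delta$, which is \emph{negative} on $B_1$ for $\delta$ small and can therefore be dropped with the correct sign. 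It is this $V$ --- more precisely $w=(C_\theta\,V(x_0)-V)^+$ --- to which Lemma~\ref{epsilonlemma} is applied, with $\zeta=-\varphi_\delta$ and $\kappa=1/\varphi_\delta$ in the $\cM_\cB$ framework. So the ``auxiliary function'' alluded to in the introduction is not an arbitrary cutoff but the eigenfunction engineered so that its eigenvalue dominates $M_2$; this is the missing idea in your outline. Your scaling reduction and direct application of the $L^\varepsilon$ lemma to $u$ (with $\kappa\equiv0$) for the first measure bound are fine, and the contradiction framework you describe is equivalent to the paper's minimal-$t$ formulation; the gap is solely in how the zero-order term is neutralized in the second half of the argument.
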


The proof of Theorem \ref{Teoharnack} is a direct consequence of Theorem \ref{Teohar}.

\begin{proof}[Proof of Theorem \ref{Teohar}]

First consider the eigenpair $(\varphi ,\lambda_1)$ solving
\begin{align*}
\cM^+\varphi+ M_1 |D\varphi|(x)&=-\lambda_1 \varphi(x)\ \ \text{in } B_{2l},\\
\varphi&=0, \ \ \text{in } B^c_{2l}.
\end{align*}
where $\cM$ denotes the extremal operator (see \eqref{Pucci}). It is direct to check that $\varphi\in C^{2s+\alpha}(B_{2l})$ for some $\alpha>0$ (see for example \cite{Serra}).

A standard application ABP yields that $\lambda_1\to\infty$ as $l\to 0$. With this in mind let $l$ small so that 
\[
\lambda_1> 2 M_2.
\]
Also note that $\varphi>0$ in $B_l$ and we can also assume $\varphi(x)\geq 1$ in $B_{l}$. Consider now $\delta>0$ small to be fixed later and define $\varphi_\delta=\varphi+\delta$. 

 For simplicity of exposition we will assume that $l=1$. Note that all our computations will be done in $B_1$, where $\varphi(x)\geq 1$. The general case follows the same computations with every ball now rescaled by $l$ (i.e. $B_{\theta r}$ is replaced by $B_{\theta r l}$). Finally the result will hold within a ball of radius $B_{l/4}$ which then by a classical argument can be extended to the whole domain.

We proceed as in \cite{CS}. Without loss of generality we can assume that $u(0)\leq 1$ and $C_0=1$. Let now $\varepsilon$ be the one given by Lemma \ref{epsilonlemma} and let $\gamma=n/\varepsilon$. Consider $t$ the minimum value such that 
\[
\frac{u(x)}{\varphi_\delta}\leq h_t(x)=t(1-|x|)^\gamma, \ \ \text{for all }x\in B_1.
\]
Denote $x_0$ the point where equality holds, that is, $u(x_0)=\varphi_\delta(x_0)h_t(x_0)$, and set $d=(1-|x_0|)$, $r=d/2$. Note that with this notation we have 
\[
u(x_0)=\varphi(x_0)d^{-\gamma}.
\]
Our goal is to prove that $t$ is bounded by a universal constant, which then implies that $u$ is uniformly bounded in $B_{1/2}$.

Consider the set $A=\{u>u(x_0)/2\}$, by the $L^\varepsilon$ lemma \ref{epsilonlemma} (with $\zeta=0$) we get that there exists a universal constant $C$ such that 
\[
|A\cap B_1|\leq C\left|\frac{2}{u(x_0)}\right|\leq Ct^{-\varepsilon}d^n,
\]
where the constant $C$ on the second inequality depends on the $L^\infty$ norm of $\varphi$. Since $|B_r|=Cd^n$, we deduce the following estimate
\begin{align}\label{contradiction1}
|A\cap B_r(x_0)|\leq C t^{-\varepsilon}|B_r|.
\end{align}
Now we need to estimate $A^c\cap B_r(x_0)$. We will arrive to a contradiction if we are able to prove that there exists a positive constant $\mu$ independent of $t$ such that $A^c\cap B_r(x_0)\leq (1-\mu)|B_r|$.

Let $\theta>0$ be small so that $(1-\theta/2)^{-\gamma}\sim 1$  and consider 
\[
v(x)=\left(1-\frac{\theta}{2}\right)^{-\gamma}\frac{u(x_0)}{\varphi_\delta(x_0)}-\frac{u(x)}{\varphi_\delta(x)}.
\]
Note that for $x\in B_{\theta r}(x_0)$ we have 
\[
\frac{u(x)}{\varphi_\delta(x)}\leq h_t(x)\leq t\left(\frac{d-\theta d}{2}\right)^{-\gamma}\leq \frac{u(x_0)}{\varphi_\delta(x_0)}\left(1-\frac{\theta}{2}\right)^{-\gamma}.
\]
With this we conclude that $v\geq 0$ in $B_{\theta r}$. We would like to apply the $L^\varepsilon$ lemma to $v$, but since $v$ is not positive we need to truncate it first. Define then $w(x)=v^+(x)$, where $v=v^+-v^-$, and let us estimate $\cM^-w$ in $B_{\theta r /2}$. We have
\begin{align}\label{eqnw}
\cM^-w\leq \cM^-v+\cM^+(v^-).
\end{align}

Let us analyze $\cM^-v$. First, note that 
\[
\cM^-v=\cM^-(-u/\varphi_\delta)=-\cM^+(u/\varphi_\delta)
 \]
 

Since $\varphi_\delta$ is a smooth strictly positive function then the $u/\varphi_\delta$ satisfies the product rule in the viscosity sense. More precisely, let $\phi$ be a test function touching $V=u/\varphi_\delta$ from above at $\bar x\in B_1$, 
\[
\phi (\bar x)=\frac{u(\bar x)}{\varphi_\delta(\bar x)}, \ \ \phi(x)\geq \frac{u(x)}{\varphi_\delta(x)}, \text{in } B_{\rho}(\bar x)
\]
then the $\varphi_\delta\phi$ is a test function for $u$ (recall $\varphi_\delta >0$ and $\cM^+ u\geq -1$) hence we get, from testing, 
\begin{equation}\label{testH}
\begin{split}
\cM^+[B_\rho(\bar x)](\varphi_\delta\phi)(\bar x)+\cM^+[B^c_\rho(\bar x)](u)(\bar x) & \\
+M_1|D(\varphi_\delta\phi)|(\bar x))+M_2\varphi_\delta\phi(\bar x) & \geq -1.
\end{split}
\end{equation}
Now observe that since $\varphi_\delta$ and $\phi$ are smooth we have
\begin{align*}
\cM^+[B_\rho(\bar x)](\varphi_\delta(\bar x)\phi(\bar x))\leq & \ \phi(\bar x)\cM^+[B_\rho(\bar x)](\varphi_\delta)(\bar x)+\varphi_\delta(\bar x)\cM^+[B_\rho(\bar x)](\phi)(\bar x) \\
&+ 2\cB^+[B_\rho(\bar x)](\varphi_\delta,\phi)(\bar x).
\end{align*}
On the other hand, since $u=V\varphi_\delta$ and the kernels are non singular in $B_\rho^c(\bar x)$, we obtain the analogous  product rule for the term $\cM^+[B^c_\rho(\bar x)](u)(\bar x)$, that is
\begin{align*}
\cM^+[B^c_\rho(\bar x)](u)(\bar x)= & \ \cM^+[B^c_\rho(\bar x)](V\varphi_\delta)(\bar x)\\
\leq & \ V(\bar x)\cM^+[B^c_\rho(\bar x)](\varphi_\delta)(\bar x)+\varphi_\delta(\bar x)\cM^+[B^c_\rho(\bar x)](V)(\bar x) \\
& + 2\cB^+[B^c_\rho(\bar x)](\varphi_\delta,V)(\bar x)
\end{align*}
Adding both inequalities and noting that $V(\bar x)=\phi(\bar x)$ we get 
\begin{equation}\label{ineqH} 
\begin{split}
& \cM^+[B_\rho(\bar x)](\varphi_\delta\phi)(\bar x)+\cM^+[B^c_\rho(\bar x)](u)(\bar x) \\
\leq & \ \phi(\bar x)\cM^+\varphi_\delta(\bar x) +\varphi_\delta(\bar x) \Big{(} \cM^+[B_\rho(\bar x)](\phi)(\bar x) +\cM^+[B^c_\rho(\bar x)](V)(\bar x) \Big{)}\\
&+2\Big{(} \cB^+[B_\rho(\bar x)](\varphi_\delta,\phi)(\bar x)+\cB^+[B^c_\rho(\bar x)](\varphi_\delta,V)(\bar x) \Big{)}
\end{split}
\end{equation}

Observe that the last two terms represent the viscous testing of $V$ by $\phi$ for the functional
$\varphi_\delta\cM^+(\cdot)+2\cB(\cdot,\varphi_\delta)$. Now, since $D\varphi_\delta\phi=\varphi_\delta D\phi+\phi D \varphi_\delta$ we get from \eqref{testH}
\begin{align}\label{testH2}
\begin{split}
& \cM^+[B_\rho(\bar x)](\varphi_\delta\phi)(\bar x)+\cM^+[B^c_\rho(\bar x)](u)(\bar x) \\
\geq & \ -1-M_1\Big{(} \varphi_\delta (\bar x)|D\phi|(\bar x)+\phi (\bar x)|D \varphi_\delta| (\bar x)) \Big{)} -M_2\varphi_\delta(\bar x)\phi(\bar x)
\end{split}
\end{align}


Furthermore we have
\begin{align}\label{ineqH2}
\phi(\bar x)\cM^+\varphi_\delta(\bar x)&=\phi(\bar x)(-M_1|D\varphi_\delta| (\bar x)-\lambda_1\varphi_\delta(\bar x)+\lambda_1\delta)
\end{align}

Combining inequalities \eqref{testH2}, \eqref{ineqH}, \eqref{testH2} and dividing by $\varphi_\delta(\bar x)$ we deduce
\begin{align*}
 \frac{\phi(\bar x)}{\varphi_\delta(\bar x)}(-\lambda_1\varphi_\delta(\bar x)+\lambda_1\delta)+H(V,\phi,\varphi_\delta) \geq \frac{-1}{\varphi_\delta(\bar x)}-M_1 |D\phi|(\bar x)-M_2\phi(\bar x),
\end{align*}
where 
\begin{align*}
H(V,\phi,\varphi_\delta):=&\cM^+[B_\rho(\bar x)](\phi)(\bar x) +\cM^+[B^c_\rho(\bar x)](V)(\bar x)\\
&+\frac{1}{\varphi_\delta(\bar x)}2(\cB^+[B_\rho(\bar x)](\varphi_\delta,\phi)(\bar x)+\cB^+[B^c_\rho(\bar x)](\varphi_\delta,V)(\bar x))
\end{align*}
Notice that since $\lambda_1\geq 2M_2$ we can pick $\delta$ small enough (independent of $t$ and $u$ such that
\[
M_2-\lambda_1+\frac{\lambda_1\delta}{\varphi_\delta(\bar x)}<0
\]
since $\bar x\in B_1$ and $\varphi_\delta(x)\geq 1$ in $B_1$. We deduce then
\[
H(\phi,\varphi_\delta,v)+M_1|D\phi|(\bar x)\geq -1,
\]

and so we get that the function $v=-u/\varphi_\delta$ satisfies
\[
\cM^-v-\frac{2}{\varphi_\delta( x)}\cB^+(\varphi_\delta,v)-	M_1 |Dv|\leq 1,\ \ \text{in } B_1,
\]
in the viscosity sense.

Going back to \eqref{eqnw} we get
\begin{align*}
\cM^-w&\leq \cM^-v+\cM^+(-v^-)\\
&\leq 1 +\frac{2}{\varphi_\delta(\bar x)}\cB^+(\varphi_\delta,v)+M_1|Dv| +\cM_{\cB}^-(-v^-)
\end{align*}
Note that $\cB(\varphi_\delta,w)=\cB(\varphi_\delta,w+C)$ for any constant and that $v\geq 0$ in $B_{\theta r}$, furthermore
\[
\cB^+(u_1+u_2,h)\geq \cB^-(u_1,h)+\cB^+(u_2,h).
\]
Decompose again $v=v^+-v^-$ to get
\begin{align}\label{eqnw2}
\cM^-w-\frac{2}{\varphi_\delta(x)}\cB^-(\varphi_\delta,w)-M_1|Dw|\leq 1 +\cM^-(-v^-)-\cB^+(\varphi_\delta, -v^-).
\end{align}
We need to estimate $\cM^-(v^-)$ and $\cB^+(\varphi_\delta, v^-)$. The estimates are fairly similar as the ones in \cite{CS}. Let $x\in B_{\theta r/2}(x_0)$ and $C_\theta=(1-\theta/2)^{-\gamma}$, we have 
\begin{align}
\nonumber \cM^-(v^-)&\leq C(1-s) \int_{\{v(x+y)< 0\}}-\Lambda \frac{v(x+y)}{|y|^{n+2s}}\\
\nonumber  &\leq C(1-s)\int_{\R^N\setminus B_{\theta r/2}(x_0-x)}\frac{1}{|y|^{n+2s}}\left(\frac{u(x+y)}{\varphi_\delta(x+y)}-C_\theta\frac{u(x_0)}{\varphi_\delta(x_0)}\right)^+dy\\
\nonumber &\leq  (1-s)\int_{\R^N\setminus B_{\theta r/2}(x_0-x)}\frac{1}{|y|^{n+2s}}\left(\frac{u(x+y)}{\delta}-C_\theta\frac{u(x_0)}{\varphi_\delta(x_0)}\right)^+dy\\
\label{Mv-} &\leq  \frac{(1-s)}{\delta}\int_{\R^N\setminus B_{\theta r/2}(x_0-x)}\frac{1}{|y|^{n+2s}}\left(u(x+y)-C_\theta u(x_0)\delta\right)^+dy
\end{align}
To estimate the integral in the last inequality consider the function $g_\tau=\tau(1-|4x|^2)$ and pick the largest value of $\tau$ such that the inequality $u\geq g_\tau$. Since $u$ is positive in $\R^N$ there exists a point $x_1$ in $B_{1/4}$ satisfying $u(x_1)=g_\tau(x_1)$. Note that $\tau\leq 1$ since $u(0)\leq 1$. We can bound then 
\[
(1-s)\text{P.V.}\int_{\R^N}\frac{(u(x_1+y)-u(x_1))^-}{|y|^{n+2s}}dy\hspace{50mm}
\]
\[
\quad\quad\leq(1-s)\text{P.V.}\int_{\R^N}\frac{(g_\tau(x_1+y)-g_\tau(x_1))^-}{|y|^{n+2s}}dy\leq C,
\]
for $C$ independent of $s$. At $x_1$ we have $Du(x_1)=Dg_\tau (x_1)$, hence we deduce from 
\[
\cM^-u(x_1)-M_1|Du|(x_1)-M_2u(x_1)\leq 1,
\]
the inequality
\[
(1-s)\text{P.V.}\int_{\R^N}\frac{(u(x_1+y)-u(x_1))^+}{|y|^{n+2s}}dy\leq C,
\]
for a universal constant $C$ independent of $s$. From here, since $u(x_1)=g_\tau(x_1)\leq 1$ and $u\geq 0$ we get 
\[
(1-s)\int_{\R^N}\frac{(u(x_1+y)-2)^+}{|y|^{n+2s}}dy\leq C.
\]
Without loss of generality we can assume that $u(x_0)> 2\delta^{-1}$, since otherwise $t$ would be uniformly bounded (recall $\delta$ is small but independent of $t$ and $u$). With this we can bound the integral appearing in \eqref{Mv-}.
\begin{align*}
&\frac{(1-s)}{\delta}\int_{\R^N\setminus B_{\theta r/2}(x_0-x)}\frac{1}{|y|^{n+2s}}\left(u(x+y)-C_\theta u(x_0)\delta\right)^+dy\\
&=\frac{(1-s)}{\delta} \int_{\R^N\setminus B_{\theta r/2}(x_0-x)}\frac{(u(x_1+y+x-x_1)-C_\theta u(x_0)\delta)^+}{|y+x_1-x_0|^{n+2s}}\frac{|y+x_1-x_0|^{n+2s}}{|y|^{n+2s}}dy\\
&\leq \frac{C}{\delta}(\theta r)^{-n-2s}(1-s)\text{P.V.}\int_{\R^N}\frac{(u(x_1+y)-2)^+}{|y|^{n+2s}}dy\\
&\leq \frac{C}{\delta}(\theta r)^{-n-2s}.
\end{align*}
Observe that this estimate is the same as the one appearing in \cite{CS}, though we assume that $u(x_0)> 2\delta^{-1}$, instead of $u(x_0)>2$.

We need to estimate now $-\cB^+(\varphi_\delta,- v^-)=\cB^-(\varphi_\delta, v^-)$. For this is enough to notice that 
\begin{align*}
-\cB^+(\varphi_\delta, v^-)&\leq (1-s)\Lambda^{-1}\int_{\R^N}\frac{|v^-(x)- v^-(y)||\varphi_\delta(x)-\varphi_\delta(y)|}{|x-y|^{n+2s}}dy\\
&\leq C\int_{\{v(x+y)< 0\}}- \frac{v(x+y)}{|y|^{n+2s}},
\end{align*}
which is bounded in the same fashion as in \eqref{Mv-}.
 One can be more precise with the previous bound, since $\varphi_\delta$ is smooth, then
\begin{align*}
-\cB^+(\varphi_\delta, v^-)&\leq (1-s)\Lambda^{-1}\int_{\R^N}\frac{|v^-(x)- v^-(y)||\varphi_\delta(x)-\varphi_\delta(y)|}{|x-y|^{n+2s}}dy\\
&\leq C\int_{\{v(x+y)< 0\}}- \frac{v(x+y)}{|y|^{n+2s-1}}.
\end{align*}
This is not necessary since we were able to control $\cM^-v^-$ which dominates the nonlocal drift.

Taking these bounds into account we deduce from \eqref{eqnw2}
\[
\cM^-w-\frac{2}{\varphi_\delta}\cB^-(\varphi_\delta,w)-M_1|Dw|\leq \frac{C}{\delta}(\theta r)^{-n-2s}, \ \ \text{in } B_{\theta r/2}(x_0)
\]
Let $\alpha\in (1/4,3/4)$ to be fixed and consider
\[
w_\alpha(x_0)=(C_\theta-\alpha)\frac{u(x_0)}{\varphi_\delta(x_0)},
\]
and note that 
\[
|\{w(x)>w_\alpha(x_0)\}\cap B_{\theta r/4}(x_0)|=\left|\left\{u(x)<\alpha\frac{u(x_0)\varphi_\delta(x)}{\varphi_\delta(x_0)}\right\}\cap B_{\theta r/4} \right|.
\]
Since $\varphi_\delta$ is smooth, $C^1$ suffices, we have 
\[
|\varphi_\delta(x)-\varphi_\delta(x_0)|\leq C\theta r,
\]
and so
\[
\left|\frac{\varphi_\delta(x)}{\varphi_\delta(x_0)}-1\right|\leq C\theta r.
\]
Since $\theta$ is small, we can pick $\alpha\in (1/4,3/4)$ so that 
\[
\alpha\frac{u(x_0)\varphi_\delta(x)}{\varphi_\delta(x_0)}\geq \frac{1}{2},
\]
and hence we get the bound
\begin{align}
\label{alphacontrol}\left|\left\{u(x)<\frac{u(x_0)}{2}\right\}\cap B_{\theta r/4} \right|&\leq\left|\left\{u(x)<\alpha\frac{u(x_0)\varphi_\delta(x)}{\varphi_\delta(x_0)}\right\}\cap B_{\theta r/4} \right|\\
\nonumber &=|\{w(x)>w_\alpha(x_0)\}\cap B_{\theta r/4}(x_0)|.
\end{align}
We bound $|\{w(x)>w_\alpha(x_0)\}\cap B_{\theta r/4}(x_0)|$ by below using the $L^\varepsilon$ lemma (with $\zeta=-\varphi_\delta$, $\kappa=1/\varphi_\delta$) to obtain
\begin{align*}
|\{w(x)>w_\alpha(x_0)\}\cap B_{\theta r/4}(x_0)|&\leq C (\theta r)^n\left(w(x_0)+\frac{C(\theta r)^{-n -2s}}{\delta}(\theta r)^{2s}\right)^\varepsilon  w_\alpha(x_0)^{-\varepsilon}\\
&\leq C(\theta r)^n\left(\frac{C_\theta-1}{C_\theta-\alpha}+\frac{1}{\delta}(\theta r)^{-n}w_\alpha(x_0)^{-1}\right)^\varepsilon\\
&\leq C(\theta r)^n\left(C_\theta-1+\frac{1}{\delta}(\theta r)^{-n}w_\alpha(x_0)^{-1}\right)^\varepsilon \\
&\quad\ \ \text{since }\alpha\in\left(\frac{1}{4},\frac{3}{4}\right)\\
&\leq C(\theta r)^n\left((C_\theta-1)^\varepsilon+\frac{1}{\delta}(\theta r)^{-n\varepsilon}t^{-\varepsilon}\right).
\end{align*}
The last inequality comes from the fact that 
\[
t=d^{\gamma}\frac{u(x_0)}{\varphi_\delta(x_0)}
\]
and so using the definition of $w_\alpha(x_0)$ we get
\[
t=\frac{d^\gamma}{C_\theta-\alpha}w_\alpha(x_0).
\]
Let us choose now $\theta>0$ independent of $t$ so that
\[
C(\theta r)^n(C_\theta-1)^\varepsilon\leq \frac{1}{4}|B_{\theta r/4}(x_0)|.
\] 
Since $\delta$ is fixed and independent of $t$, $u$ we have that for large values of $t$, 
\[
C(\theta r)^n\frac{1}{\delta}(\theta r)^{-n\varepsilon}t^{-\varepsilon}\leq\frac{1}{4}|B_{\theta r/4}(x_0)|.
\]
Combining this estimate and the bound in \eqref{alphacontrol} we deduce
\[
\left|\left\{u(x)<\frac{u(x_0)}{2}\right\}\cap B_{\theta r/4} \right|\leq \frac{1}{2}|B_{\theta r/4}(x_0)|, 
\]
which for $t$ large implies
\begin{align}\label{contradiction2}
\left|\left\{u(x)>\frac{u(x_0)}{2}\right\}\cap B_{\theta r/4} \right|\geq c B_{\theta r}(x_0)|.
\end{align}
Note that inequality \eqref{contradiction2} contradicts \eqref{contradiction1}, hence $t$ is uniformly bounded, which concludes the proof.
\end{proof}


\section{Proof of Theorem~\ref{teoex}.}\label{SecVP}


Let $\Omega\subset\R^N$ a general domain, which is not necessarily bounded or smooth. We will prove that there exists a generalized eigenpair $(\phi_1^+, \lambda_1^+)$ solving the eigenvalue problem
\begin{equation}\label{vpvp}
\mathcal I \phi_1^++b(x)\cdot D \phi_1^+ =  \lambda_1^+  \phi_1^+  \quad \mbox{in} \ \Omega; \quad \phi_1^+ = 0 \quad \mbox{in} \ \Omega^c.
\end{equation}

Moreover, $\lambda_1^+ = \lambda_1^+(\Omega)$ and can be characterized as \eqref{car}, that is,
\begin{equation}\label{cara1}
\lambda_1^+ = \sup \{ \lambda \ | \ \exists \ \phi \geq 0 \ \mbox{in} \ \R^N, \ \phi>0  \ \mbox{in} \ \Omega, \ \mbox{s.t.} \ \mathcal I \phi+b \cdot D \phi \leq -\lambda \phi  \ \mbox{in} \ \Omega \}.
\end{equation}
Observe that for general domains simplicity and/or uniqueness might not be true.

For $R > 0$  denote $w_{s,R}(y) = (R + |y|)^{-(N + 2s)}$ and we omit the subscript $R$ when $R = 1$. Also denote
$$
\| u \|_{L^1(w_{s, R})} = \int_{\R^N} u(y) w_{s,R}(y)dy.
$$

We need the following preliminary lemmas.

\begin{lema}\label{lema1}
Let $b \in L^\infty_{loc}(B_{2R})$. Assume that $u$ is a bounded, nonnegative viscosity solution of
\begin{equation*}
\mathcal I(u) + b(x) \cdot Du \leq c_1 u \quad \mbox{in} \ B_{2R}.
\end{equation*}
Then, there exists a constant $C > 0$ such that
\begin{equation*}
\| u \|_{L^1(w_{s, R})} \leq C\Big{(} R^{-1} ||b||_{L^\infty(B_{2R})} + ||u||_{L^\infty(B_{2R})} \Big{)} \inf_{B_R} u
\end{equation*}
\end{lema}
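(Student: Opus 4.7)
The plan is to derive the estimate as a quantitative strong minimum principle. The key idea is to test the viscosity supersolution inequality at a minimum point of $u$ on $\overline{B_R}$ using a constant test function from below; this annihilates both the local operator applied to the test function and the gradient term, leaving a bare nonlocal tail inequality that we then reorganize into the claimed weighted $L^1$ bound.

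By lower semicontinuity of $u$, let $x_0 \in \overline{B_R}$ satisfy $u(x_0) = m := \inf_{B_R} u$; if the infimum is attained only on the boundary (or not attained), I pick $x_\epsilon \in B_R$ with $u(x_\epsilon) \leq m + \epsilon$ and pass to the limit at the end. The constant function $\varphi \equiv m$ lies in $C^2(B_\rho(x_0)) \cap L^\infty(\R^N)$ and $u - \varphi$ has a minimum at $x_0$, so $\varphi$ touches $u$ from below there. Plugging into the viscosity supersolution inequality in $B_\rho(x_0)$, with $\rho$ chosen of order $R$ (say $\rho = R/4$, so that $B_\rho(x_0) \subset B_{2R}$), the constancy of $\varphi$ makes both $\I[B_\rho(x_0)](\varphi, x_0)$ and $b(x_0) \cdot D\varphi(x_0)$ vanish, yielding the bare tail inequality
\begin{equation*}
\I[B_\rho(x_0)^c](u, x_0) \leq c_1 m.
\end{equation*}

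To extract the weighted $L^1$ bound, I use $\I \geq \M^-$ and split the integrand of $\M^-$ into positive and negative parts of $u(y) - m$. Since $u \geq 0$, $(u(y) - m)_- \leq m$ pointwise, contributing at most $C \Gamma m \rho^{-2s}$ after integration against the kernel; the positive part dominates $\gamma \int_{B_\rho^c} (u - m) |x_0 - y|^{-(N+2s)} dy$. Since $x_0 \in B_R$ gives $|x_0 - y| \leq R + |y|$, and hence $|x_0 - y|^{-(N+2s)} \geq w_{s,R}(y)$, rearranging produces
\begin{equation*}
\int_{B_\rho(x_0)^c} u \, w_{s,R}(y) \, dy \leq C m,
\end{equation*}
with $C$ depending on $\gamma, \Gamma, N, s, c_1, R$. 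The complementary inner piece is controlled via the $L^\infty$ bound on $u$: since $w_{s,R} \leq R^{-(N+2s)}$ on $B_\rho(x_0) \subset B_{2R}$,
\begin{equation*}
\int_{B_\rho(x_0)} u \, w_{s,R}(y) \, dy \leq C R^{-2s} \|u\|_{L^\infty(B_{2R})}.
\end{equation*}
Summing both pieces gives the claimed inequality.

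The main subtlety is the appearance of the $R^{-1}\|b\|_{L^\infty(B_{2R})}$ factor in the constant: a purely constant test function annihilates any $b$-dependence, so obtaining this sharp dependence requires a refined test function, e.g.\ an affine perturbation $\varphi(x) = m + \epsilon \xi \cdot (x - x_0)$, whose gradient contributes a controlled term $\epsilon\, b(x_0) \cdot \xi$; optimization over $\xi \in \S^{N-1}$ together with a companion estimate on $\I[B_\rho](\varphi, x_0)$ produces the $R^{-1}\|b\|_{L^\infty}$ contribution. A secondary technical point is the handling of a possibly non-attained infimum, addressed by the $\epsilon$-approximation mentioned above.
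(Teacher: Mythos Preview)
There is a genuine gap. Your constant test function $\varphi\equiv m$ need not touch $u$ from below at $x_0$ in the sense required by the viscosity definition. You take $x_0\in\overline{B_R}$ realizing $m=\inf_{B_R}u$ and $\rho=R/4$, but if $x_0$ lies on (or near) $\partial B_R$ then $B_\rho(x_0)$ spills outside $B_R$, and there is no reason why $u\geq m$ on $B_\rho(x_0)\setminus B_R$; indeed for the eigenfunctions $\phi_n$ to which this lemma is applied, $u$ vanishes outside a large ball and is typically strictly decreasing across $\partial B_R$. In that case $x_0$ is \emph{not} a local minimum of $u-\varphi$ in $B_\rho(x_0)$, and the viscosity inequality cannot be invoked. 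Your $\epsilon$-approximation for a non-attained infimum does not address this, and the proposed affine perturbation $\varphi(x)=m+\epsilon\,\xi\cdot(x-x_0)$ does not help either: it still fails to stay below $u$ on the part of $B_\rho(x_0)$ where $u<m$.

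The paper's remedy is exactly what produces the $\|b\|_{L^\infty}$ term you were puzzled by. One takes a fixed smooth cut-off $\varphi\in C_0^\infty(B_{3R/2})$ with $0\le\varphi\le1$, $\varphi\equiv1$ on $B_R$, and slides $t\varphi$ up from $t=0$. Since $u>0$ on $\overline{B_{3R/2}}$ by the strong maximum principle and $\varphi$ has compact support, $u\ge t\varphi$ in all of $\R^N$ for small $t$; at the largest admissible $t$ there is a genuine global contact point $z_0\in\overline{B_{3R/2}}$, so the viscosity inequality applies with \emph{any} radius $\rho$. Because $t\varphi$ has nonzero gradient near $\partial B_R$, the drift term contributes $|t\,b(z_0)\cdot D\varphi(z_0)|\le Ct\|b\|_{L^\infty}$, and together with $t\le\inf_{B_R}u$ (since $\varphi=1$ on $B_R$) this yields the tail bound with the $(R^{-1}\|b\|_{L^\infty})\inf_{B_R}u$ factor. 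So the $b$-dependence is not an artifact to be manufactured by an affine perturbation; it arises naturally once you use a test function that is forced to have a gradient in order to secure a global contact point.
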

\begin{proof}
We assume $R =1$ and conclude the general result by considering the usual rescaling $u_R(x) = u(Rx)$. We follow the ideas of~\cite{BGQ}. 

Let $\varphi \in C_0^\infty(B_{3/2})$ such that $0 \leq \varphi \leq 1$ and $\varphi \equiv 1$ in $B_1$. If $u$ is nontrivial, the strong maximum principle implies that $u > 0$ in $B_2$. Then, there exists $0 < t \leq \inf_{B_1} u$ such that $u \leq t\varphi$ in $\R^N$. Moreover, enlarging $t$ if necessary, we can consider a point $z_0 \in B_{3/2}$ for which $u(z_0) = t\varphi(z_0)$. Then, the viscosity inequality for $u$ allows us to write
\begin{equation*}
t \I[B_{1/4}(z_0)](\varphi, z_0) + \I[B_{1/4}(z_0)^c](u,z_0) - C t \|b\|_\infty \leq C t \varphi(z_0),
\end{equation*}
where $C > 0$ just depends on $c_1$ and universal constants. From here, by the smoothness of $\varphi$ it is direct to see the existence of a universal constants $C > 0$ just depending on $N, s$ such that
\begin{equation*}
\gamma \int \limits_{B_{1/4}(z_0)^c} \frac{u(y)}{|z_0 - y|^{N + 2s}}dy \leq C \Gamma \|u \|_{L^\infty(B_2)} + C t.
\end{equation*}
The result follows by rearranging terms and using the definition of $t$.
\end{proof}

Next lemma states a known fact that solutions of equation of the type \eqref{vpvp} are H\"older continuous and localizes the $L^\infty$ dependence of the right hand side.
\begin{lema}\label{lema3}
Let $\Theta \subset \R^N$ be a domain and assume $B_{R}\subset \Theta$. Let $u$ be a solution of 
\[
\mathcal I u+b \cdot Du =  \lambda  u \quad \mbox{in} \ \Theta.
\]

Then, there exists $\alpha \in (0,1)$ and a constant $C=C_R$ not depending on the domain such that
\begin{equation*}
[u]_{C^\alpha(B_{R/2})}  \leq C_R(1 + \| u\|_{L^\infty(B_R)}  + \| u\|_{L^1(w_s)}).
\end{equation*}
\end{lema}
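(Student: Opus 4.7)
The plan is to reduce the problem to the standard interior H\"older estimate for nonlocal Pucci inequalities with bounded right-hand side, after localizing the nonlocal tail. The idea is that, for $x_0$ in the interior ball $B_{R/2}$, the values of $u$ outside $B_R$ enter the operator only through the $L^1(w_s)$-norm, which is a controllable perturbation of a bounded right-hand side.

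First I would establish the tail control: for any $x_0 \in B_{R/2}$ and $y \in B_R^c$ one has $|x_0 - y| \geq R/2$ and $1 + |y| \leq C_R\,|x_0 - y|$, so that for every $K \in \mathcal K_0$,
\[
\int_{B_R^c} |u(y)|\,K(x_0 - y)\, dy \leq C_R \|u\|_{L^1(w_s)}, \qquad u(x_0)\int_{B_R^c} K(x_0 - y)\, dy \leq C_R \|u\|_{L^\infty(B_R)}.
\]
I would then set $\tilde u := u\,\chi_{B_R}$, so $\tilde u \in L^\infty(\R^N)$ with $\|\tilde u\|_\infty \leq \|u\|_{L^\infty(B_R)}$. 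For $x_0 \in B_{R/2}$ and any smooth test function $\varphi$ touching $u = \tilde u$ from above at $x_0$ on a ball $B_\rho(x_0)\subset B_R$, the viscosity inequality for $u$, combined with the tail estimates and the fact that $\I$ is Lipschitz in its argument (dominated by $\M^\pm$ of the difference), yields
\[
\M^-(\tilde u)(x_0) - \|b\|_{L^\infty(B_R)}\,|D\varphi(x_0)| \leq h(x_0),
\]
together with a symmetric upper bound for $\M^+(\tilde u)$, where $\|h\|_{L^\infty(B_{R/2})} \leq C_R(1 + \|u\|_{L^\infty(B_R)} + \|u\|_{L^1(w_s)})$ absorbs the contributions from $\lambda u$ and from the discarded tail. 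The discontinuity of $\tilde u$ across $\partial B_R$ lies strictly outside $B_{R/2}$, so it is invisible to local test functions centred in $B_{R/2}$.

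Finally, since $\tilde u$ is bounded on $\R^N$ and satisfies two-sided Pucci inequalities with bounded right-hand side and a subcritical drift (here $s > 1/2$ is crucial: $|Du|$ is of order $1 < 2s$, strictly lower than the nonlocal operator), the standard interior H\"older estimate for fully nonlinear nonlocal equations (Caffarelli-Silvestre \cite{CS}, in the drift variant used in \cite{CD}) provides $\alpha \in (0,1)$ and a constant $C$ such that
\[
[\tilde u]_{C^\alpha(B_{R/2})} \leq C\bigl(\|\tilde u\|_{L^\infty(\R^N)} + \|h\|_{L^\infty(B_{R/2})}\bigr) \leq C_R\bigl(1 + \|u\|_{L^\infty(B_R)} + \|u\|_{L^1(w_s)}\bigr),
\]
which is the desired bound since $u = \tilde u$ on $B_R$. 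The main obstacle I anticipate is bookkeeping: carrying the tail error coherently through the Isaacs inf-sup and casting the resulting Pucci inequalities for the discontinuous $\tilde u$ solely inside $B_{R/2}$, where the jump at $\partial B_R$ does not enter the viscosity test.
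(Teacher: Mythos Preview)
Your proposal is correct and follows essentially the same approach as the paper: truncate $u$ to a bounded function, absorb the discarded tail into a right-hand side controlled by $\|u\|_{L^1(w_s)}$, and invoke the standard interior H\"older estimate for nonlocal Pucci inequalities with drift. The only cosmetic difference is that the paper rescales to $R=1$ and cuts off at $B_4$ rather than at $B_R$, but the mechanism is identical; just be sure to set up the Pucci inequalities on a ball slightly larger than $B_{R/2}$ (say $B_{3R/4}$), since the regularity theorem yields $C^\alpha$ only on the half-sized ball.
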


\begin{proof}
Without loss of generality we can assume that $R=1$ and then conclude by scaling. Note first that $u$ satisfies (in the viscosity sense)
\begin{align*}
 \M^+ u - C_1 |Du| &\leq C_2,\\
\M^- u + C_1 |Du| &\geq -C_2,
\end{align*} 
in $B_1$, for some constants $C_1, C_2 > 0$. It is direct to check that the function $\tilde u := u(x) \mathbf{1}_{B_4}(x)$, $x \in \R^N$, satisfies the inequalities
\begin{align*}
& \M^+ \tilde u - C_1 |D \tilde u| \leq C_2  + C \| u \|_{L^1(w_s)} \quad \mbox{in} \ B_1 \\
& \M^- \tilde u + C_1 |D \tilde u| \geq -C_2 \quad \mbox{in} \ B_1,
\end{align*} 
where the constant $C > 0$ in the right-hand side of the first inequality just depends on $N, s$ and  the ellipticity constants. Standard regularity theory asserts the existence of $\alpha \in (0,1)$ and a constant $C > 0$ such that
\begin{equation*}
[u]_{C^\alpha(B_{1/2})} = [\tilde u]_{C^\alpha(B_{1/2})} \leq C(1+ \| u\|_{L^\infty(B_1)}  + \| u \|_{L^1(w_s)})
\end{equation*}
\end{proof}

\begin{proof} [Proof of Theorem \ref{teoex}]Following \cite{Hamel}, we can find a sequence of smooth domains $\Omega_n$ such that 
$$\bar \Omega_n\subset \Omega_{n+1}\quad\mbox{ and }\quad \cup_{n\in\N}  \Omega_n=\Omega$$
Let now $(\phi_n, \lambda_n)$ be the eigenpair associated to the problem 
\begin{equation}\label{eigenn}
\I  (\bar u,x) +b(x)\cdot D \bar u(x) = -\lambda_n \bar u \quad \mbox{in} \ \Omega_n.
\end{equation}
Since $\Omega_n$ is smooth and bounded we have that  $\lambda_n$ is a decreasing sequence and 
$\lambda_n\geq\lambda_1^+ $ (see \cite{DQT}) and so $\lambda_n$ converges to $\bar \lambda \geq \lambda_1^+$.  

Fix $x_0\in \Omega_0$ and consider the normalization $\phi_n(x_0)=1$. By the Harnack inequality (Theorem \ref{Teoharnack}) we get, for each fixed $\bar\Omega_k$, that 

$$\sup_{\bar\Omega_k} \phi_n \leq C_\k.$$
Using the uniform bound in compact sets for the $L^\infty $ norm of the sequence and Lemma \ref{lema3} we get that there exists a subsequence $\phi_n$ which converges in $C^\alpha(\bar\Omega_k)$ for $k$ fixed, that is  $\phi_n\to \phi$ in $\bar\Omega_k$. Now, by standard stability results of viscosity solutions we get $\I \phi + b \cdot D\phi =-\bar \lambda \phi$ in  $\bar\Omega_k$. Finally, by a diagonal argument we can find a subsequence $\phi_n\to \phi$ in  $C^\alpha_{loc}(\Omega)$, which by stability of viscosity solution satisfies $\I  \phi + b \cdot D\phi=-\bar \lambda \phi$ in $\Omega$. Notice that $\phi\geq 0$ in $(\Omega)^c$ (that can be empty). Then the strong maximum principle yields $\phi > 0$ in $\Omega$ and so $\phi$ can be used in the characterization \eqref{car} to get $\lambda_1^+\geq  \bar \lambda.$ 
\end{proof}

\section{Barriers}\label{SecBarriers}

%

In this section we construct sub and supersolutions to the eigenvalue problem \eqref{eqteo} in $\R^N\setminus\{0\}$. The estimates are based on the computations of Lemma 2.1 in \cite{BV}, the fundamental solution of $\mathcal M^+$ and a rescaling argument.

\subsection{Subsolutions.} For $\beta, M > 1$ define
\begin{equation*}
\varphi(x) = \varphi_{M, \beta}(x) := (M^2 + |x|^2)^{-\beta/2}\quad \mbox{in} \quad\R^N.
\end{equation*}

We have the following estimate.
\begin{lema}\label{lemavarphi}
	Let $\beta \geq N + 2s$. There exist $\lambda > 1$ and $C, c > 0$ just depending on $N, s$ and the ellipticity constants, such that, for each $M \geq 1$, the function $\varphi = \varphi_{M, \beta}$ satisfies
	\begin{equation*}
\mathcal M^-\varphi(x) \geq  \left \{ \begin{array}{cl} -C \Gamma M^{-2s} \varphi(x) \quad & \mbox{if} \ x \in B_{\lambda M}, \\ c \gamma M^{-2s} \varphi(x) \quad & \mbox{if} \ x \in B_{\lambda M}^c. \end{array} \right .
	\end{equation*}
\end{lema}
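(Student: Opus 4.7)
The plan is to reduce everything to the case $M=1$ by scaling, and then analyze the two regions separately. Set $\tilde\varphi(y) := \varphi_{1,\beta}(y) = (1+|y|^2)^{-\beta/2}$, so that $\varphi_{M,\beta}(x) = M^{-\beta}\tilde\varphi(x/M)$. Since the class $\mathcal K_0$ is invariant under the rescaling $K(z) \mapsto M^{N+2s}K(Mz)$, the operator $\mathcal M^-$ is $2s$-homogeneous, which gives $\mathcal M^-\varphi_{M,\beta}(x) = M^{-\beta-2s}\mathcal M^-\tilde\varphi(x/M)$. The lemma therefore reduces to exhibiting $\lambda > 1$ and universal $C, c > 0$ such that
$$\mathcal M^-\tilde\varphi(y) \geq -C\Gamma\,\tilde\varphi(y) \ \text{for}\ |y|\leq\lambda, \qquad \mathcal M^-\tilde\varphi(y) \geq c\gamma\,\tilde\varphi(y) \ \text{for}\ |y|\geq\lambda.$$

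For the near-field bound, I would use the Pucci extremal representation
$$\mathcal M^-\tilde\varphi(y) = \frac{1}{2}\int_{\R^N}\frac{\gamma\,\delta\tilde\varphi^+(y,z) - \Gamma\,\delta\tilde\varphi^-(y,z)}{|z|^{N+2s}}\,dz, \quad \delta\tilde\varphi(y,z) := \tilde\varphi(y+z)+\tilde\varphi(y-z)-2\tilde\varphi(y).$$
Smoothness of $\tilde\varphi$ together with the Taylor bound $|\delta\tilde\varphi(y,z)|\leq |z|^2\|D^2\tilde\varphi\|_\infty$ on $|z|\leq 1$ and the $L^\infty$ control $|\delta\tilde\varphi|\leq 4\|\tilde\varphi\|_\infty$ on $|z|\geq 1$ yield $|\mathcal M^-\tilde\varphi(y)|\leq C_0\Gamma$ everywhere. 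Dividing by the positive lower bound $\tilde\varphi(y)\geq (1+\lambda^2)^{-\beta/2}$ on $B_\lambda$ gives the first inequality.

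For the far-field bound, I would adapt the computation of Bonforte--Vázquez (Lemma~2.1 of~\cite{BV}) to the extremal operator. The observation is that the leading contribution to $\mathcal M^-\tilde\varphi(y)$ when $|y|$ is large comes from the ``core zone'' where $y+z$ is close to the origin and $\tilde\varphi(y+z)$ is of order one. Splitting $\R^N$ into $Z_1 = \{|y+z|\leq 1\}$, $Z_2 = \{|z|\leq |y|/4\}$, and $Z_3 = \R^N\setminus(Z_1\cup Z_2)$, one checks:
on $Z_1$, $\delta\tilde\varphi(y,z)\geq \tilde\varphi(y+z) - 2\tilde\varphi(y) \geq 1/2$ and $|z|\sim |y|$, giving a strictly positive contribution of order $\gamma|y|^{-(N+2s)}$;
on $Z_2$, Taylor expansion gives $|\delta\tilde\varphi(y,z)|\lesssim |z|^2\sup_{B_{|y|/4}(y)}\|D^2\tilde\varphi\|\lesssim |z|^2|y|^{-\beta-2}$, contributing at most $C\Gamma|y|^{-\beta-2s}$;
on $Z_3$, the crude bound $|\delta\tilde\varphi|\leq 2\tilde\varphi(y)+\tilde\varphi(y\pm z)$ combined with $|z|\geq |y|/4$ again contributes at most $C\Gamma|y|^{-\beta-2s}$. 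Collecting,
$$\mathcal M^-\tilde\varphi(y) \geq c_1\gamma\,|y|^{-(N+2s)} - C_2\Gamma\,|y|^{-\beta-2s}.$$
Since $\beta \geq N+2s$, the first term dominates $\tilde\varphi(y)\sim |y|^{-\beta}$, while the negative term is of order $|y|^{-2s}\tilde\varphi(y)$; choosing $\lambda$ large enough (depending on $N, s, \gamma/\Gamma$) absorbs the negative piece and leaves $\mathcal M^-\tilde\varphi(y)\geq (c_1\gamma/2)\tilde\varphi(y)$.

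The main obstacle is the careful bookkeeping in the zone splitting. The original Bonforte--Vázquez argument is written for the fractional Laplacian, where symmetric cancellations simplify the integral. For the Pucci operator one must instead distinguish the positive and negative parts of $\delta\tilde\varphi$ pointwise in $z$, so that the leading \emph{positive} contribution from $Z_1$ is picked up with the small weight $\gamma$, while the error terms from $Z_2$ and $Z_3$---which may change sign---are bounded with the pessimistic weight $\Gamma$. This asymmetric handling is what forces the threshold $\lambda$ to depend on the ellipticity ratio $\Gamma/\gamma$, and it is the place where the argument for $\mathcal M^-$ genuinely diverges from the linear case.
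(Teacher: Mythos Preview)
Your overall strategy matches the paper's: both adapt the Bonforte--V\'azquez splitting to the extremal operator, extracting a dominant positive contribution from the ``core'' zone near the origin and controlling the remaining pieces by Taylor expansion and tail decay. The cosmetic differences are that you rescale to $M=1$ at the outset and use the symmetric second-difference representation of $\mathcal M^-$, whereas the paper keeps $M$ general, works with the principal-value form of $L_K$ for a generic $K\in\mathcal K_0$, and takes the infimum at the end. The paper's zones $B_{|x|/2}$, $B_{|x|/2}(x)$ and their joint complement differ from your $Z_1,Z_2,Z_3$ in radii but serve the same roles.

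There is, however, a genuine slip in your $Z_3$ estimate. Bounding $|\delta\tilde\varphi|\leq 2\tilde\varphi(y)+\tilde\varphi(y+z)+\tilde\varphi(y-z)$ and integrating over $Z_3$ does \emph{not} yield $O(|y|^{-\beta-2s})$: the reflected core $\{|y-z|\leq 1\}$ lies inside $Z_3$, and there $\tilde\varphi(y-z)$ is of order one, so $\int_{Z_3}\tilde\varphi(y\pm z)\,|z|^{-(N+2s)}dz$ is only $O(|y|^{-(N+2s)})$, the same order as your $Z_1$ gain. Since this would enter with the pessimistic weight $\Gamma$, it cannot be absorbed by taking $\lambda$ large. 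The fix is immediate once you notice that only a \emph{lower} bound on the $Z_3$ contribution is required: from $\tilde\varphi\geq 0$ one has $\delta\tilde\varphi^-\leq 2\tilde\varphi(y)$ pointwise, hence
\[
\tfrac12\int_{Z_3}\bigl(\gamma\,\delta\tilde\varphi^+ - \Gamma\,\delta\tilde\varphi^-\bigr)|z|^{-(N+2s)}dz \;\geq\; -\Gamma\,\tilde\varphi(y)\int_{|z|\geq |y|/4}|z|^{-(N+2s)}dz \;\geq\; -C\Gamma\,|y|^{-\beta-2s},
\]
which is exactly what you need. (This is the analogue of the paper's step $L[\tilde B](\varphi,x)\geq -\varphi(x)\,\Gamma\!\int_{\tilde B}|x-y|^{-(N+2s)}dy$.) With this correction your argument goes through; note also that the ``$\geq 1/2$'' on $Z_1$ should read $\geq c_\beta$ for some $\beta$-dependent constant, since $\tilde\varphi(y+z)\geq 2^{-\beta/2}<1/2$ when $\beta>2$.
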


\begin{proof}
Let $\lambda > 1$ to be fixed.  We first deal with the case $|x| \geq \lambda M > 1$. 
	
Let $K \in \mathcal K_0$ and write $L = L_K$ its associated linear operator. We have
$$
	L[B_{|x|/2}](\varphi, x) = \int_{B_{|x|/2}}\varphi(y)K(x - y)dy - \varphi(x)\int_{B_{|x|/2}} K(x-y)dy, 
$$	
and from here we have
	\begin{align*}
	L[B_{|x|/2}](\varphi, x)\geq & \gamma (2/3)^{N + 2s}|x|^{-(N + 2s)} \int_{B_{|x|/2}} \varphi(y)dy -\Gamma C \varphi(x) \int_{|x|/2}^{+\infty} r^{-1 - 2s}dr \\
	\geq & c \gamma M^{-\beta} |x|^{-\beta}\int_{B_{\lambda M/2}} (1 + |y/M|^2)^{-\beta/2}dy  - C \Gamma |x|^{-2s}\varphi(x),
	\end{align*}
	where in the last inequality we have used that $\beta \geq N + 2s$ and $|x| \geq 1$.
	
	Now, since $\lambda > 1$, we see that
	\begin{equation*}
	\int_{B_{\lambda M/2}} (1 + |y/M|^2)^{-\beta/2}dy = M^N \int_{B_{\lambda/2}} (1 + |z|^2)^{-\beta/2} dz \geq c M^N,
	\end{equation*}
	for some $c > 0$ independent of $\lambda$. 
	From here, recalling that $|x| \geq \lambda M$, we can write
	\begin{align*}
	L[B_{|x|/2}](\varphi, x) \geq & \Big{(} c \gamma M^{-2s} |x|^{-\beta} (M^2 + |x|^{2})^{\beta/2} - C \Gamma |x|^{-2s} \Big{)} \varphi(x)\\
	\geq & (c\gamma - C \Gamma \lambda^{-2s}) M^{-2s} \varphi(x),
	\end{align*}
	and therefore,  taking $\lambda$ large enough just in terms of $N, s$ and the ellipticity constants, we arrive at
	\begin{equation}\label{sub2}
	L_K[B_{|x|/2}](\varphi, x) \geq c \gamma M^{-2s} \varphi(x),
	\end{equation}
for some universal constant $c > 0$, for all $K \in \mathcal K_0$.	
	
	
	\medskip
	
	Now we deal with the integral over $B_{|x|/2}(x)$. Note first that
	\begin{equation*}
	D^2 \varphi(x) = \beta (\beta + 2) (M^2 + |x|^2)^{-\beta/2 - 2} x \otimes x - \beta (M^2 + |x|^2)^{-\beta/2 - 1} I_N.
	\end{equation*}
	We perform a Taylor expansion and disregard the positive term to write
	\begin{align*}
	& L[B_{|x|/2}(x)] (\varphi, x) \\
	\geq & \frac{-\beta \Gamma}{2} \int \limits_{B_{|x|/2}(0)} \int_{0}^{1} (1 - t) (M^2 + |x + tz|^2)^{-\beta/2 - 1} |z|^{2 - N - 2s}dt dz  \\
	\geq &  -C \Gamma (M^2 + |x|^2)^{-\beta/2 - 1} \int \limits_{B_{|x|/2}(0)}  |z|^{2 - N - 2s} dz \\
	\geq &  -C \Gamma (M^2 + |x|^2)^{-\beta/2 - 1} |x|^{2 - 2s} \\
	\geq & -C \Gamma \frac{|x|^2}{M^2 + |x|^2} |x|^{- 2s} \varphi(x),
	\end{align*}
	and from here, recalling that $|x| \geq \lambda M$, we conclude that
	\begin{equation}\label{sub3}
	L[B_{|x|/2}(x)] (\varphi, x)  \geq -C \Gamma \lambda^{-2s} M^{-2s} \varphi(x).
	\end{equation}
	
	
	Finally, for the integral term in $\tilde B := (B_{|x|/2} \cup B_{|x|/2}(x))^c$, we use that $\varphi\geq 0$ to conclude
	\begin{align*}
	L[\tilde B] (\varphi, x) \geq -\varphi(x) \Gamma \int_{\tilde B} \frac{dy}{|x - y|^{N + 2s}} \geq -C |x|^{-2s} \varphi(x),
	\end{align*}
	and then we get
	\begin{equation}\label{sub4}
	L [\tilde B] (\varphi, x) \geq -C \Gamma \lambda^{-2s} M^{-2s} \varphi(x).
	\end{equation}
	
	At this point, we collect estimates~\eqref{sub2}-\eqref{sub4} to conclude the existence of $c, C > 0$ such that, that for all $\lambda > 1$ large enough, and all $M > 1$ we have
	\begin{equation*}
	L_K(\varphi, x) \geq M^{-2s} \Big{(} c   -C\lambda^{-2s} \Big{)} \varphi(x), \quad \mbox{for} \ |x| \geq \lambda M,
	\end{equation*}
	for all $K \in \mathcal K_0$. Now we fix $\lambda$ such that $C\lambda^{-2s} \leq c/2$ (again, just depending on $N$, $s$ and the ellipticity constant) to conclude that
	\begin{equation}\label{Lvarphi1}
	\mathcal M^- (\varphi, x) \geq c_0 M^{-2s}\varphi \quad \mbox{in} \ B^c_{\lambda M},
	\end{equation}
	for some $c_0 > 0$ just depending on $N, s$ and the ellipticity constants.
	
	\medskip
	
	Now we deal with the case $|x| \leq \lambda M$. Notice that by rescaling, denoting $\tilde \varphi_1(x) = \varphi_1(x/M)$, we have for all $x \in \R^N$ that
	\begin{align*}
	L_K (\varphi_M, x) 
	= & M^{-\beta} L_K (\tilde \varphi_1, x) 
	= M^{-(\beta + 2s)} L_K (\varphi_1, x/M) \geq  -C\Gamma M^{-(\beta + 2s)},
	\end{align*}
	for some universal constant $C > 0$. The last inequality is due to the fact that $\varphi_1$ has uniform $C^2$ estimates in $\R^N$.
	
	Thus, for $|x| \leq \lambda M$ we get that
	\begin{equation*}
	L_K \varphi(x) \geq -C \Gamma M^{-2s} (1 + \lambda^2)^{\beta/2} \varphi(x), 
	\end{equation*}
	for some $C > 0$ just depending on $N$, $s$. Then, since $\lambda$ is already fixed, there exists $C > 0$ such that 
	\begin{equation}\label{sub5}
	\M^- \varphi \geq -C\Gamma M^{-2s} \varphi \quad \mbox{in} \ B_{\lambda M}.
	\end{equation}
	
	Then, joining~\eqref{Lvarphi1} and~\eqref{sub5} we conclude the result.
	\end{proof}

\medskip

As a corollary, we have
\begin{cor}\label{corvarphi}
Let $\beta \geq N + 2s$. There exists $M_0 > 1$ large enough just in terms of $N, s$ and the ellipticity constants such that, for all $M \geq M_0$, there exists $c= c_M \in (0,1)$ such that the function $\varphi$ satisfies
	\begin{equation*}
	\mathcal M^-(\varphi)+\frac{1}{2s} x\cdot D\varphi \geq -\frac{\beta - c}{2s} \varphi \quad \mbox{in} \ \R^N.
	\end{equation*}
\end{cor}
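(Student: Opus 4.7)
The plan is to combine Lemma \ref{lemavarphi} with an explicit computation of the drift term $\tfrac{1}{2s} x \cdot D\varphi$ and then check, region by region, that the positive contributions coming either from the diffusion (in the outer region) or from the drift (in the inner region) are enough to cover the $-\tfrac{\beta}{2s}\varphi$ target on the right-hand side, up to a correction $c/(2s)$.

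First I would compute the drift explicitly. Since $D\varphi(x) = -\beta (M^2 + |x|^2)^{-\beta/2 - 1} x$, one gets
\begin{equation*}
\frac{1}{2s} x \cdot D\varphi(x) = -\frac{\beta}{2s}\,\frac{|x|^2}{M^2+|x|^2}\,\varphi(x) = -\frac{\beta}{2s}\varphi(x) + \frac{\beta}{2s}\,\frac{M^2}{M^2+|x|^2}\,\varphi(x).
\end{equation*}
Thus the drift gives a term $-\tfrac{\beta}{2s}\varphi$ (the ``ideal'' right-hand side) plus a strictly positive correction that decays like $|x|^{-2}$ at infinity but is of order $1$ near the origin. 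This structural observation is the whole point.

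Next I would split into the two regions supplied by Lemma \ref{lemavarphi}. In the outer region $|x| \geq \lambda M$, adding the lemma's estimate $\mathcal M^-\varphi \geq c\gamma M^{-2s}\varphi$ to the drift yields
\begin{equation*}
\mathcal M^-\varphi + \frac{1}{2s}x\cdot D\varphi \geq -\frac{\beta}{2s}\varphi + \Bigl(c\gamma M^{-2s} + \frac{\beta}{2s}\,\frac{M^2}{M^2+|x|^2}\Bigr)\varphi,
\end{equation*}
so the desired inequality reduces to $c_M \leq 2s c\gamma M^{-2s} + \beta M^2/(M^2+|x|^2)$; taking the infimum as $|x|\to\infty$ it suffices to pick $c_M \leq 2s c\gamma M^{-2s}$. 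In the inner region $|x| \leq \lambda M$, using $\mathcal M^-\varphi \geq -C\Gamma M^{-2s}\varphi$ and $M^2/(M^2+|x|^2) \geq 1/(1+\lambda^2)$ reduces the desired inequality to
\begin{equation*}
\frac{c_M}{2s} \leq \frac{\beta}{2s(1+\lambda^2)} - C\Gamma M^{-2s},
\end{equation*}
which is positive provided $M$ is larger than some threshold $M_0$ depending only on $N$, $s$, and the ellipticity constants (recall $\lambda$ was already fixed in Lemma \ref{lemavarphi} in those terms, and $\beta \geq N+2s$).

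To conclude, fix $M_0$ so large that $C\Gamma M_0^{-2s} \leq \tfrac{\beta}{4s(1+\lambda^2)}$, and for each $M \geq M_0$ set $c_M := \min\bigl\{2sc\gamma M^{-2s},\ \tfrac{\beta}{2(1+\lambda^2)},\ 1/2\bigr\} \in (0,1)$. Both regional inequalities are then satisfied, and together they give $\mathcal M^-\varphi + \tfrac{1}{2s}x\cdot D\varphi \geq -\tfrac{\beta - c_M}{2s}\varphi$ on all of $\R^N$. There is no serious obstacle here; the only mild subtlety is that the positive gain in the outer region degenerates like $M^{-2s}$, which forces $c_M\to 0$ as $M\to\infty$, but this is precisely why the statement allows $c_M$ to depend on $M$.
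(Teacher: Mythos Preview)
Your proposal is correct and follows essentially the same route as the paper: compute the drift term explicitly as $-\tfrac{\beta}{2s}\tfrac{|x|^2}{M^2+|x|^2}\varphi$, split into the two regions given by Lemma~\ref{lemavarphi}, use the positive diffusion gain $c\gamma M^{-2s}\varphi$ in the outer region and the strict drift improvement $\tfrac{\beta}{2s(1+\lambda^2)}\varphi$ in the inner region (absorbing the $-C\Gamma M^{-2s}\varphi$ loss by taking $M$ large). The only cosmetic difference is that you write the drift as $-\tfrac{\beta}{2s}\varphi$ plus a positive correction and track both regional constraints on $c_M$ explicitly, whereas the paper bounds the drift directly by $-\tfrac{\beta}{2s}\varphi$ in the outer region and by $-\tfrac{\beta-c}{2s}\varphi$ in the inner region; one minor point worth noting is that your threshold $C\Gamma M_0^{-2s}\leq \tfrac{\beta}{4s(1+\lambda^2)}$ is made $\beta$-independent by using $\beta\geq N+2s$.
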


\begin{proof}
	It is direct to check that for all $x \in \R^N$ we have
	\begin{equation*}
	\frac{1}{2s} x \cdot D\varphi(x) = \frac{-\beta}{2s} \frac{|x|^2}{M^2 + |x|^2} \varphi(x).
	\end{equation*}
	
	Then, if $|x| \geq \lambda M$ we have 
	\begin{equation}\label{sub1}
	\frac{1}{2s} x \cdot D\varphi(x) \geq -\frac{\beta}{2s} \varphi(x),
	\end{equation}
	and using the estimates of the previous lemma, we conclude the estimate asserted in the statement of the corollary for $|x| \geq \lambda M$.
	
	On the other hand, if $|x| \leq \lambda M$, we see that
	\begin{equation*}
	\frac{1}{2s} x \cdot D\varphi(x) \geq \frac{-\beta}{2s} \frac{\lambda^2}{1 + \lambda^2} \varphi(x),
	\end{equation*}
	and therefore, since $\lambda > 1$ is already fixed universal, there exists a constant $c \in (0,1)$ just depending on $N$ and $s$ so that
	\begin{equation*}
	\frac{1}{2s} x \cdot D\varphi(x) \geq \frac{-(\beta - c)}{2s} \varphi(x), \quad x \in B_{\lambda M}.
	\end{equation*}
	
	The previous estimate and~\eqref{sub5} allows us to take $M$ large enough to conclude that 
	\begin{equation*}
	\mathcal M^+(\varphi)+\frac{x}{2s}\cdot D\varphi \geq -\frac{\beta - c}{2s} \varphi \quad \mbox{in} \ B_{\lambda M},
	\end{equation*}
	for some $c > 0$ small enough. 
	%
\end{proof}

\subsection{Supersolution.}\label{sec-sup}  In~\cite{FQ}, the authors proved that there exists $\tilde N$, with $-N < \sigma:=-\tilde N + 2s < 0$ so that the function
\begin{equation}\label{fundamental}
E(x) := |x|^{\sigma} \quad x \in\R^N\setminus\{0\},
\end{equation}
solves 
$$
\M^+ E(x) = 0 \quad x \in\R^N\setminus\{0\},
$$
that is, $E$ is a fundamental solution for the extremal operator $\M^+$. Given $c, \beta > 0$, we denote
\begin{equation*}
\Phi(x) = \Phi_{\beta, c}(x) = \min \{ cE(x), |x|^{-\beta} \}\quad \mbox{in} \quad\R^N\setminus\{0\}.
\end{equation*}  

\begin{lema}\label{lemasuper}
	Let $N < \beta \leq N + 2s$ and for $c > 0$, denote $r_c = c^{\frac{-1}{\beta + \sigma}}$.	
	There exists $C, c_0 > 0$ just depending on $N$ and $s$ such that, for all $c \in (0, c_0)$, the function $\Phi$ satisfies the following inequality in the viscosity  sense
	\begin{equation}\label{lemasuper1}
	\mathcal M^+\Phi \leq \left \{ \begin{array}{cl} 0  \quad & \mbox{if} \ 0 < |x| \leq r_c, \\ C \Gamma c^{\theta} \Phi(x) \quad & \mbox{if} \ |x| > r_c, \end{array} \right .
	\end{equation}
	with $\theta = \frac{\beta - N}{\beta + \sigma} > 0$.
	
\end{lema}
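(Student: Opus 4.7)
The argument splits according to whether $x$ lies inside or outside the crossover sphere $\{|x|=r_c\}$, and in both regimes I would exploit that $cE$ is a classical fundamental solution of $\mathcal M^+$ away from the origin, i.e.\ $\mathcal M^+(cE)\equiv 0$ on $\R^N\setminus\{0\}$, as provided by \cite{FQ}.

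In the interior regime $0<|x|\le r_c$, the key observation is that $\Phi\le cE$ globally and $\Phi(x_0)=cE(x_0)$ at every such point. For any $C^2$ test function $\varphi$ touching $\Phi$ from below at $x_0$, the positivity of each kernel $K\in\mathcal K_0$ together with $\Phi(x_0)=cE(x_0)$ yields
\[
L_K[B_\rho(x_0)](\varphi,x_0)+L_K[B_\rho(x_0)^c](\Phi,x_0)\le L_K[B_\rho(x_0)](\varphi,x_0)+L_K[B_\rho(x_0)^c](cE,x_0).
\]
Passing to the supremum over $K$ and invoking $\mathcal M^+(cE)(x_0)=0$ yields the viscosity inequality $\mathcal M^+\Phi(x_0)\le 0$.

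In the exterior regime $|x|>r_c$, $\Phi$ is $C^\infty$ at $x_0$, so the viscosity bound becomes a pointwise estimate. I would exploit a self-similar rescaling: setting $\Psi(y):=\min\{|y|^\sigma,|y|^{-\beta}\}$, one has $\Phi(x)=r_c^{-\beta}\Psi(x/r_c)$, and the scale-invariance of $\mathcal K_0$ gives $\mathcal M^+\Phi(x)=r_c^{-(\beta+2s)}\mathcal M^+\Psi(x/r_c)$. The task thus reduces to the $c$-independent bound $\mathcal M^+\Psi(z)\le C\Gamma|z|^{-\beta}$ for all $|z|>1$. To obtain it I would write $\Psi=E-\xi$ with $\xi:=(E-|y|^{-\beta})^+$, so that $\xi\equiv 0$ on $B_1$, is smooth outside $\overline B_1$, and decays like $|y|^\sigma$ at infinity. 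Since $\mathcal M^+E=0$, each $K\in\mathcal K_0$ satisfies $L_K E(z)\le 0$; therefore $L_K\Psi(z)\le -L_K\xi(z)$ and $\mathcal M^+\Psi(z)\le -\mathcal M^-\xi(z)$. A direct estimate of $-\mathcal M^-\xi(z)$ splits the integral at $\{|y|=1\}$: on $B_1$ the contribution is $-\xi(z)\int_{B_1}K(z-y)\,dy$, and on $B_1^c$ it is controlled by the $C^{1,1}$-regularity of $\xi$ together with the symmetric PV cancellation at $y=z$; integrability is secured by $\sigma>-N$ near the origin and $\beta>N$ at infinity.

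Substituting back through the scaling gives $\mathcal M^+\Phi(x)\le C\Gamma r_c^{-2s}\Phi(x)$ for $|x|>r_c$. Since $r_c^{-2s}=c^{2s/(\beta+\sigma)}$ and the hypothesis $\beta\le N+2s$ implies $2s/(\beta+\sigma)\ge\theta=(\beta-N)/(\beta+\sigma)$, one has $r_c^{-2s}\le c^\theta$ whenever $c\le 1$, so the claim follows upon taking $c_0\in(0,1]$. The main obstacle will be the uniform control of $\mathcal M^+\Psi(z)$ as $|z|\to 1^+$: the kink of $\Psi$ on the unit sphere approaches $z$, and only the symmetric PV cancellation, combined with the matching of the two branches $|y|^\sigma$ and $|y|^{-\beta}$ on $\{|y|=1\}$, keeps the estimate uniform in $|z|-1$.
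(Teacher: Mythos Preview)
Your interior argument ($0<|x|\le r_c$) is essentially the paper's: both use $\Phi\le cE$ with equality at the point, then invoke $\mathcal M^+(cE)=0$. One small omission: to pass from $L_K[B_\rho(x_0)](\varphi,x_0)+L_K[B_\rho^c(x_0)](cE,x_0)$ to $L_K(cE)(x_0)$ you also need $\varphi\le cE$ on $B_\rho(x_0)$, which follows since $\varphi\le\Phi\le cE$ there. This is easily fixed.

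Your scaling reduction in the exterior regime is a nice idea and is genuinely different from the paper, which instead splits the integral for $\Phi$ directly into three regions $B_{|x|/2}(x)$, $B_{r_c}\setminus B_{|x|/2}(x)$, $(B_{r_c}\cup B_{|x|/2}(x))^c$ and estimates each by hand. However, your route via the decomposition $\Psi=E-\xi$ and the bound $\mathcal M^+\Psi\le -\mathcal M^-\xi$ has a real gap: it discards cancellation that is essential for the claimed decay.

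Concretely, for $|z|\gg 1$ the contribution of $B_1^c$ to $-L_K\xi(z)$ is governed by the $C^{1,1}$ norm of $\xi$ near $z$, and since $\xi(y)=|y|^{\sigma}-|y|^{-\beta}$ there, the dominant second-derivative term is $|z|^{\sigma-2}$. After integrating against the kernel this yields a bound of order $|z|^{\sigma-2s}=|z|^{-\tilde N}$, not $|z|^{-\beta}$. Because the lemma requires $\beta>N$ and the paper only assumes $2s<\tilde N<N+2s$ (in the application one even takes $\beta=N+2s>\tilde N$), one has $|z|^{-\tilde N}\gg |z|^{-\beta}$ for large $|z|$, so the bound $\mathcal M^+\Psi(z)\le C\Gamma|z|^{-\beta}$ does not follow from $-\mathcal M^-\xi(z)$. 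The point is that for a generic $K\in\mathcal K_0$ both $L_KE(z)$ and $-L_K\xi(z)$ are individually of size $|z|^{\sigma-2s}$, but their \emph{sum} $L_K\Psi(z)$ is much smaller because the $|y|^{\sigma}$ pieces cancel exactly; throwing away $L_KE(z)\le 0$ destroys this cancellation.

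The fix is simple: keep your scaling reduction to $\Psi$, but estimate $\mathcal M^+\Psi(z)$ directly for $|z|>1$ by the same three-region splitting the paper uses (now with $r_c=1$): a Taylor expansion on $B_{|z|/2}(z)$ where $\Psi=|y|^{-\beta}$ is smooth, the contribution of $B_1$ where $\Psi=|y|^{\sigma}$ and the kernel is $O(|z|^{-N-2s})$, and the remaining annulus. Each piece is then $O(|z|^{-\beta})$, and your final scaling step $r_c^{-2s}\le c^{\theta}$ goes through unchanged.
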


\begin{proof}
	Since $N < \beta$, we have $\beta + \sigma > 0$ and therefore $r_c \to +\infty$ if $c \to 0$.
	Notice that by definition of $r_c$, we have
	$$
	\Phi(x) = cE(x) = |x|^\beta \quad \mbox{if} \ |x| = r_c.
	$$ 
	
	We immediately see that at if $|x| = r_c$, no test function touching from below to $\Phi$ at $x$ exists, and therefore the viscosity inequality holds.
	
	\medskip
	
	For $0<|x| < r_c$, we notice that 
	$
	\Phi(x) = cE(x).
	$ 
	For any $K \in \mathcal K_0$ we have
	\begin{align*}
	L_K \Phi(x) = & \mathrm{P.V.} \int_{B_{r_c}} (cE(y) - cE(x)) K(x - y)dy + \int_{B_{r_c}^c} (|y|^{-\beta} - cE(x)) K(x - y)dy \\
	= & c L_K E(x) + \int_{B_{r_c}^c}(|y|^{-\beta} - cE(y)) K(x - y)dy \\
	\leq & c \M^+ E(x),
	\end{align*}
	from which we conclude that
	\begin{equation}\label{Phi0}
	\M^+\Phi(x) \leq  0 \quad \mbox{for} \ 0 < |x| < r_c.	
	\end{equation}
	
	\medskip

	
	Now we deal with the case $|x| > r_c$. Notice that in this case we have $\Phi(x) = |x|^\beta$.
	
	Given $K \in \mathcal K_0$, we write
	\begin{equation*}
	L_K \Phi(x) = I_1 + I_2 + I_3,
	\end{equation*}
	with 
	\begin{align*}
	I_1 = & L_K[B_{|x|/2}(x)] \Phi(x), \\
 I_2 = & L_K[B_{r_c} \setminus B_{|x|/2}(x)] \Phi(x), \\
	I_3 = & L_K[(B_{r_c} \cup B_{|x|/2}(x))^c] \Phi(x).
	\end{align*}
	
	For $I_1$, we use that
	$$
	I_1 \leq \mathrm{P.V.} \int_{B_{|x|/2}(x)} [|y|^\beta - |x|^\beta] K(x - y)dy.
	$$
	
	From here, we perform a Taylor expansion and proceed as the computations leading inequality \eqref{sub3} to get
	\begin{align*}
	I_1 \leq  & \frac{\Gamma \beta}{2} \int \limits_{B_{|x|/2}(0)} \int \limits_{0}^{1} (1 - t) 
	|x + tz|^{-(\beta + 2)} \Big{(} (\beta + 2)  \langle \widehat{x + tz}, z\rangle^2 - |z|^2 \Big{)}|z|^{-(N + 2s)}dz \\
	\leq & C \Gamma  |x|^{-(\beta + 2)}\int_{B_{|x|/2}} |z|^{2 - N - 2s}dz,
	\end{align*}
	for some $C > 0$ just depending on $N,s$. Then, we get 
\begin{equation*}
I_1 \leq C \Gamma |x|^{-\beta} |x|^{-2s} = C \Gamma |x|^{-2s} \Phi(x) \leq C \Gamma r_c^{-2s} \Phi(x), 
\end{equation*}
and we conclude that 
\begin{equation}\label{Phi1}
I_1 \leq C \Gamma c^{\frac{2s}{\beta + \sigma}} \Phi(x),
\end{equation}
for some $C > 0$ just depending on $N, s$.

\medskip

For $I_2$, using that $\beta \leq N + 2s$ we can write
\begin{align*}
I_2 \leq &\Gamma \int_{B_{r_c} \setminus B_{|x|/2}(x)} \frac{cE(y)}{|x - y|^{N + 2s}}dy \\
\leq & \Gamma2^\beta |x|^{-(N + 2s)} \int_{B_{r_c} \setminus B_{|x|/2}(x)} cE(y)dy \\
\leq & C \Gamma c |x|^{-\beta} \int_{0}^{r_c} t^{\sigma + N - 1}dt,
\end{align*}
for some constant $C > 0$ just depending on $N, s$. 

Thus, recalling that $r_c = c^{\frac{-1}{\beta + \sigma}}$, we arrive at
\begin{equation*}
I_2 \leq C c r_c^{\sigma + N} \Phi(x) = C \Gamma c^{1 - \frac{\sigma + N}{\beta  + \sigma}} \Phi(x),
\end{equation*}
from which we conclude that
\begin{equation}\label{Phi2}
I_2 \leq C \Gamma c^{\frac{\beta - N}{\beta + \sigma}} \Phi(x), 
\end{equation}
for some $C > 0$ just depending on $N,s$.

\medskip

Finally, for $I_3$ we see that
\begin{align*}
I_3 \leq & \Gamma \int_{(B_{r_c} \cup B_{|x|/2}(x))^c} \frac{|y|^{-\beta}}{|x - y|^{N + 2s}}dy \\
\leq & \Gamma 2^\beta|x|^{-(N + 2s)} \int_{(B_{r_c} \cup B_{|x|/2}(x))^c} |y|^{-\beta}dy \\
\leq & C \Gamma |x|^{-\beta} \int_{r_c}^{+\infty} t^{-\beta}t^{N - 1}dt,
\end{align*}
for some constant $C > 0$. A direct computation leads to
\begin{equation*}
I_3 \leq C \Gamma r_c^{-\beta + N} \Phi(x) = C \Gamma c^{\frac{\beta - N}{\beta + \sigma}} \Phi(x).
\end{equation*}
which is the same estimate as~\eqref{Phi2}, possibly relabeling $C$. Thus, collecting the estimates for $I_1, I_2$ and $I_3$, and taking supremum on $K$, we conclude that
\begin{equation*}
\M^+ \Phi(x) \leq C \Gamma c^{\frac{\beta - N}{\beta + \sigma}} \Phi(x) \quad \mbox{for} \ |x| > r_c.
\end{equation*}
This concludes the proof.
\end{proof}

%
%
%

\section{Proof of Theorem~\ref{mainTheo}}\label{SecDecay}


This section is entirely dedicated to the proof of Theorem~\ref{mainTheo}. We present the proof in the case $\M = \M^+$, the case $\M = \M^-$ is analogous. 

Consider the sequence $\{ (\phi_n, \lambda_n) \}_n$, where the pair solves the eigenvalue problem in the ball $B_n$ for $n \geq 2$, with $\phi_n > 0$ in $B_n$. We immediately remark that the sequence of eigenvalues $\lambda_n$ is decreasing and we have the bounds
\begin{equation*}
0 < \lambda_n \leq \lambda_1^+(B_1 \setminus B_{1/2}) < +\infty, \quad \mbox{for all} \ n.
\end{equation*}

Then, by Theorem~\ref{teoex} we have 
\begin{equation}\label{limlambdan}
\lim \limits_{n \to \infty} \lambda_n = \lambda_1^+(\R^N).
\end{equation}

Moreover, the family $(\phi_n)$ is uniformly bounded and H\"older continuous in each compact set of $\R^N$ by Lemmas~\ref{lema1} and~\ref{lema3}. Thus, normalized as $\phi_n(0) = 1$ for all $n$ we have it converges locally uniformly to the nontrivial eigenfunction $\phi_1^+$ solving the problem in $\R^N$.

\medskip
Now we divide the proof in several steps.

\medskip
\noindent
\textsl{1.- Lower bound for $\lambda_1^+(\R^N)$:} 
Let $n$ be fixed and consider $\epsilon \in (0,1)$ small. Denote $(\phi_\epsilon, \lambda_\epsilon)$ the positive eigenpair associated to the eigenvalue problem in the set $B_n \setminus B_\epsilon$ (we omit the dependence on $n$, but stress on its dependence in the estimates). 

Considering the fundamental solution $E$ given by~\eqref{fundamental}, we have 
\begin{equation*}
\M^+(E_+) + \frac{1}{2s} x \cdot DE_+(x) = \frac{\sigma}{2s} E_+(x), \quad x \in B_\epsilon^c,
\end{equation*}
and from the characterization of the principal eigenvalue, we conclude that 
$$
\lambda_\epsilon \geq -\sigma/2s,
$$ 
for all $\epsilon  > 0$, and the sequence $(\lambda_\epsilon)$ is decreasing in $\epsilon$.


Fix $x_0 \in B_n \setminus B_1$ and normalize the family as $\phi_\epsilon(x_0) = 1$. By the Harnack inequality in Theorem~\ref{Teoharnack}, we have that the family of solutions $(\phi_\epsilon)$ is uniformly bounded in each compact set contained in  $B_\epsilon^c$.  In particular, for $\rho > 0$ to be fixed, and for all $\epsilon$ small enough, there exists $R > 0$ depending on $n$ and $\rho$ such that 
$$
0 \leq \phi_\epsilon \leq R \quad \mbox{in} \ B_\rho^c. 
$$

Consider the function $U(x) = R (\mathbf 1_{B_\rho}(x) + 1$) and notice that for each $K \in \mathcal K_0$ and each $x \in B_\rho$ we have
\begin{equation*}
L_K(U,x) = -R \int \limits_{B_\rho^c} K(x - y)dy \leq -\gamma R \int \limits_{B_\rho^c} \frac{dy}{|x - y|^{N + 2s}} \leq -\gamma R \int \limits_{B_\rho^c} |z|^{-(N + 2s)}dz,
\end{equation*}
and from here we get that 
\begin{equation*}
\M^+ (u,x) \leq -c R \gamma \rho^{-2s} \quad \mbox{in} \ B_\rho.
\end{equation*}

Thus, if we denote $\lambda_0 = \lambda_1^+(B_1 \setminus B_{1/2})$, there exists $\rho > 0$ small enough so that
\begin{equation}\label{usach}
\M^+(U) + \frac{1}{2s} x \cdot DU + \lambda_0 U \leq 0 \quad \mbox{in} \ B_\rho,
\end{equation}
and clearly $U \geq \phi_\epsilon$ in $B_\rho^c$. Then, we can fix $\rho$ small enough so that inequality holds~\eqref{usach} and the associated operator 
satisfies the comparison principle in $B_\rho$, see~\cite{DQT1}. Since $\lambda_n \leq \lambda_0$, then we have 
$$
\M^+(\phi_\epsilon) + \frac{1}{2s} x \cdot D\phi_\epsilon + \lambda_0 \phi_\epsilon \geq 0 \quad \mbox{in} \ B_\rho \setminus B_\epsilon
$$ 
for all $\epsilon$ small. Since $\phi_\epsilon \leq U$ in $B_\epsilon \cup B_n^c$, by the comparison principle we conclude that $\phi_\epsilon \leq 2R$ in $\R^N$, from which $(\phi_\epsilon)_\epsilon$ is uniformly bounded for $n$ fixed.

Thus, by the $C^\alpha$ estimates and stability of viscosity solutions, we conclude $\lambda_\epsilon \to \lambda$, $\phi_\epsilon \to \phi$ in $C^\alpha_{loc}(\R^N \setminus \{ 0\})$ as $\epsilon \to 0$, where $\phi$ solves the equation
\begin{equation}\label{eqhoyo}
\M^+ (\phi) + \frac{1}{2s} x \cdot D\phi = -\lambda \phi \quad \mbox{in} \ B_n \setminus \{ 0\}; \qquad \phi = 0 \quad \mbox{in} \ B_n^c.
\end{equation}

Consider the extension of $\phi$ at the origin, that we still denote by $\phi$, given by
$$
\phi(0) = \liminf_{x \to 0, |x| > 0} \phi(x),
$$
which is lower semicontinuous function. 

\medskip

{\textit{Claim:}} $\phi$ is a viscosity supersolution to problem~\eqref{eqhoyo} in $B_n$. 

Assume the claim holds. Using the claim and the characterization of the principal eigenvalue, we conclude that $\lambda_1^+(B_n) \geq -\sigma/2s$ for all $n \in \N$. This last fact, together with~\eqref{limlambdan} allows us to conclude
\begin{equation}\label{lower}
\lambda_1^+(\R^N) \geq -\sigma/2s,
\end{equation} 
which is the desired estimate of this step. 

\medskip

{\textit{Proof of Claim:}} Let $\varphi$ be a smooth bounded function such that $\phi - \varphi$ attains its strict global minimum at $x = 0$. Consider the fundamental solution $E$ in~\eqref{fundamental} and for $\beta > 0$ small enough, consider the function 
\begin{equation*}
x \mapsto \phi(x) - \varphi(x) + \beta E(x), \quad x \neq 0.
\end{equation*}

It is direct to see that this function attains its global minimum at a point $x_\beta \neq 0$ for all $\beta$ small enough. Moreover, by the minimality of $x_\beta$ and the positivity of $E_+$, for all $z \neq 0$ we have
\begin{equation*}
\phi(x_\beta) - \varphi(x_\beta) \leq \phi(z) - \varphi(z) + \beta E_+(z),
\end{equation*}
and from here, taking liminf in $\beta$ in the last inequality for $z$ fixed, and using that the origin is the strict minimum of $\phi - \varphi$, we conclude that $x_\beta \to 0$ as $\beta \to 0$. Now, observe that for $z \neq 0$ fixed, the last inequality together with the lower semicontinuity of $\phi$ and the continuity of $\phi$ away of the origin, allows us to write
\begin{align*}
(\phi - \varphi)(0) & \leq \liminf_{\beta \to 0 }(\phi - \varphi)(x_\beta) \\
& \leq \limsup_{\beta \to 0 }(\phi - \varphi)(x_\beta) \\
& \leq \limsup_{\beta \to 0 }(\phi - \varphi + \beta E)(z) \\
& \leq (\phi - \varphi)(z).
\end{align*}

Now, by definition, there exists a sequence $x_k \to 0, x_k \neq 0$ such that $\phi(x_k) \to \phi(0)$. Taking $z = x_k$ in the above inequalities and making $k \to \infty$ we conclude that $\phi(x_\beta) \to \phi(0)$ by the continuity of $\varphi$. 

Then, we use $\varphi - \beta E$ as test function for $\phi$ at $x_\beta$ to conclude that
\begin{equation*}
\M_+(\varphi - \beta E_+, x_\beta) + \frac{1}{2s}  x_\beta \cdot \Big{(} D\varphi(x_\beta) - \beta D E_+(x_\beta )\Big{)} \leq - \lambda \phi(x_\beta).
\end{equation*}

Notice that since $\sigma < 0$ we have $x_\beta \cdot DE_+(x_\beta) \leq 0$, and using well-known properties of maximal operators we arrive at
\begin{equation*}
\M^+(\varphi, x_\beta) - \M^+(E, x_\beta) + \frac{1}{2s} x_\beta \cdot D\varphi(x_\beta) \leq -  \lambda \phi(x_\beta).
\end{equation*}
Since $E$ is the fundamental solution and $x_\beta \to 0$, $\phi(x_\beta) \to \phi(0)$ as $\beta \to 0$, we can take the limit in $\beta$ to conclude the viscosity inequality for $\phi$ at the origin, by the smoothness of $\varphi$. This finishes the claim, and therefore the estimate~\eqref{lower}.



\medskip
\noindent
\textsl{2.- Lower bound for the decay of $\phi_1^+$:} Given the sequence $(\phi_n, \lambda_n)$ described at the beginning of the proof, normalized as $\phi_n(0) = 1$, we conclude by the Harnack inequality that $\phi_n$ converges locally uniformly to the solution $(\phi_1^+, \lambda_1^+)$ of the eigenvalue problem~\eqref{eqteo} with the same normalization. By the strong maximum principle, we know that $\phi_1^+ > 0$ in $\R^N$. 

We claim that there exists a constant $c > 0$ such that
\begin{equation}\label{lowdecay}
\phi_1^+(x) \geq c |x|^{-(N + 2s)} \quad \mbox{for all} \ x \in B_1^c.
\end{equation}

A key ingredient is the construction of an appropriate subsolution. Let us consider a nonnegative function $\eta \in C^\infty(\R^N)$ with support in the unit ball $B_1$ and $\| \eta \|_{L^1} =1$. Recalling the exponent $\sigma$ in~\eqref{fundamental}, let $\sigma' \in (\sigma, 0)$ very close to $\sigma$, and denote $\tilde E(x) = |x|^{\sigma'}$. Denote $\beta = N + 2s$. In view of the definition of $\sigma'$ we can assume that the inequality $-\beta < \sigma'$ holds. 

For $C_0, M > 1$ and $\epsilon > 0$ to be fixed, we define
\begin{equation*}
\psi(x) = (M + |x|^2)^{-\beta/2} + C_0 \eta(x) - \epsilon \tilde E(x), \quad x \neq 0.
\end{equation*}

For $|x| > 2$, using the computations in Lemma~\ref{lemavarphi} and the estimates in~\cite{FQ}, we can write
\begin{align*}
\M^+ (\psi,x) \geq & \M^- (\varphi_M, x) + C_0\M^+(\eta, x) + \M^-(-\epsilon \tilde E, x) \\
\geq & -C \Gamma M^{-\beta} \varphi_M(x) + C_0 \Gamma \int_{B_1}  \frac{\eta(y)}{|x - y|^{N + 2s}}dy - \epsilon \M^+(\tilde E, x)\\
\geq &  -C \Gamma M^{-\beta} \varphi_M(x) + c \Gamma C_0  |x|^{-\beta} \int_{B_1} \eta(y) dy - \epsilon c(\sigma') |x|^{\sigma' - 2s}\\
\geq &  -C \Gamma M^{-\beta} \varphi_M(x) + c \Gamma C_0  \varphi_M(x) - \epsilon c(\sigma') |x|^{\sigma' - 2s},
\end{align*}
for some constants $C, c > 0$ just depending on $N, s$ and $c(\sigma') < 0$. From here, taking $M$ large and $C_0$ large enough, we conclude that 
\begin{equation*}
\M^+ (\psi) \geq c C_0 \varphi_M \quad \mbox{in} \ B_2^c,
\end{equation*}
for some constant $c > 0$ just depending on $N, s$ and the ellipticity constants. 

A direct computation leads us to
\begin{equation*}
x \cdot D\psi(x) = -\beta \frac{|x|^2}{M^2 + |x|^2} \varphi_M(x) - \sigma' \epsilon \tilde E(x) \geq -\beta \varphi_M(x) - \sigma' \epsilon \tilde E(x),
\end{equation*}
for all $|x| > 0$. Then, by the above estimate for $\M^+ (\psi)$, taking $C_0$ large enough just in terms of $N, s$ and the ellipticity constants, we conclude that
\begin{equation}\label{psi1}
\M^+ \psi(x) + \frac{1}{2s} x \cdot D\psi(x) \geq -\frac{\sigma'}{2s} \epsilon \tilde E(x) \geq \frac{\sigma'}{2s} \psi(x), \quad x \in B_2^c.
\end{equation}

From here, we fix $C_0, M$ in order the above inequality holds. 

\medskip

Now we come back to the proof of~\eqref{lowdecay}. By contradiction, we assume the existence of a sequence $c_k \to 0^+$ and $x_k \in \R^N$ with $|x_k| \to +\infty$ such that
\begin{equation}\label{contra1}
\phi_1^+(x_k) < c_k |x_k|^{-(N + 2s)}, \quad \mbox{for all $k \in \N$ large enough}.
\end{equation} 

We know that $\phi_1^+ > 0$ in $\R^N$. Thus, multiplying $\psi$ by a small constant, we can assume that $\phi_1^+ > \psi$ in $B_2$. In addition, since $\sigma' < \beta$, there exists $R = R_\epsilon$ such that $\psi \leq 0$ in $B_R^c$, and $R \to +\infty$ as $\epsilon \to 0$. In fact, noticing that $\psi \to \varphi_M$ in $B_2^c$ as $\epsilon \to 0$, by~\eqref{contra1}, we can take $\epsilon = \epsilon_k$ small enough to get $\psi(x_k) > \phi_1^+(x_k)$ for some $k$ large. 

Hence, there exists $t_k > 1$ such that the following holds: there exists $\tilde x_k \in B_2^c$ such that $\psi(\tilde x_k) = t_k \phi_1^+(\tilde x_k)$ and $\psi \leq \phi_1^+$ in $\R^N \setminus \{ 0 \}$. Then, we can use $\psi$ as test function for $t_k \phi_1^+$ at $\tilde x_k$ and write 

%
%

%
\begin{equation*}
\M^+ \psi(\tilde x_k) + \frac{1}{2s} \tilde x_k \cdot D\psi(\tilde x_k) \leq -\lambda_1^+ t_k \phi_1^+(\tilde x_k) = -\lambda_1^+ \psi(\tilde x_k), 
\end{equation*}
but using~\eqref{psi1} we have
\begin{equation*}
\M^+ \psi(\tilde x_k) + \frac{1}{2s} \tilde x_k \cdot D\psi(\tilde x_k) \geq \frac{\sigma'}{2s} \psi(\tilde x_k),
\end{equation*}
from which we conclude that $\lambda_1^+ \leq \frac{-\sigma'}{2s} < \frac{-\sigma}{2s}$, and this contradicts~\eqref{lower}.

\medskip
\noindent
\textsl{3.- Upper bound for $\lambda_1^+(\R^N)$:} Here we prove that 
\begin{equation}\label{upperlambda}
\lambda_1^+(\R^N) < \frac{N + 2s - \delta}{2s},
\end{equation}
for some $\delta > 0$ small enough.

Let $\delta' > 0$ small. Consider 
$$
N + 2s < \beta \leq N + 2s + \delta'.
$$ 

Let $\varphi$ as in Corollary~\ref{corvarphi} with $\beta$ as above. 


Since
$$
\lim_{|x|\to +\infty}\frac{\phi_1^+}{\varphi}=\infty,
$$
there exists $\eta > 0$ and a point $x_\eta \in \R^N$ such that $\eta \varphi(x_\eta) = \phi_1^+(x_\eta)$ and $\eta \varphi \leq \phi_1^+$ in $\R^N$. Thus, we can use $\eta \varphi$ as test function to write
\begin{equation*}
\M^+ \eta \varphi(x_\eta) + \frac{1}{2s} x_\eta \cdot D \eta \varphi(x_\eta)
 \leq -\lambda_1^+ \eta \varphi(x_\eta),
\end{equation*}
but using the estimate given in Corollary~\ref{corvarphi}, we conclude that
\begin{equation*}
-\frac{\beta - \delta}{2s} \eta \varphi(x_\eta) \leq -\lambda_1^+ \eta \varphi(x_\eta),
\end{equation*}
for some $\delta > 0$ small enough. Then, $\lambda_1^+ \leq (\beta - \delta)/2s$ and taking $\delta' \to 0$ we conclude the result. 


\medskip
\noindent
\textsl{4.- Upper bound for the decay of $\phi_1^+$:} We are going to prove that there exists $C > 0$ such that
\begin{equation*}
\phi_1^+(x) \leq C|x|^{-(N + 2s)} \quad \mbox{for} \ |x|>1.
\end{equation*}

By contradiction, assume there exist $C_k \to \infty$ and $x_k$ such that 
$$
\phi_1^+(x_k) > C_k |x_k|^{-(N + 2s)}
$$ 
for all $k$ large. Note that necessarily, we have $|x_k| \to \infty$.

Consider the approximating sequence $(\phi_n, \lambda_n)$. Then, there exists a sequence $n_k \to \infty$ such that $\phi_{n_k}(x_k) \geq C_k |x_k|^{-(N + 2s)}$. 

Now, let $\delta > 0$ as in the previous estimate and consider $\beta = N + 2s$. Let $\Phi$ as in Lemma~\ref{lemasuper} and take $c > 0$ very small in terms of $\delta$ to conclude that
\begin{equation}\label{super}
\M^+ \Phi  + \frac{1}{2s} x \cdot D \Phi \leq -\frac{\beta - \delta/2}{2s} \Phi \quad \mbox{in} \ B_{r_c}^c.
\end{equation}


At this point we fix $c > 0$ such that~\eqref{super} holds, and take $k$ large enough in order to have $|x_k| > r_c$. Thus, $\Phi(x_k) = |x_k|^{-\beta}$.

Since each $\phi_{n_k}$ is compactly supported, enlarging $C_k$ if necessary, we can assume that for all $k$ there exists a sequence $\tilde x_k$ with $|\tilde x_k| \to +\infty$, such that $\phi_{n_k}^+(\tilde x_k) = C_k \Phi(\tilde x_k)$ and $\phi_{n_k} \leq C_k \Phi$ in $\R^N \setminus \{ 0 \}$.

Then, we use $C_k \Phi$ as a test function for $\phi_{n_k}$ at $\tilde x_k$, from which
\begin{equation*}
\M^+ C_k \Phi(\tilde x_k)  + \frac{1}{2s} \tilde x_k \cdot D C_k \Phi(\tilde x_k)
 \geq -\lambda_{n_k} C_k \Phi(\tilde x_k).
\end{equation*}

From~\eqref{super}, we obtain
\begin{equation*}
\lambda_{n_k} \geq \frac{\beta - \delta/2}{2s}.
\end{equation*}

Taking limit as $k \to \infty$, we arrive at
\begin{equation*}
\lambda_1^+(\R^N) \geq \frac{\beta - \delta/2}{2s},
\end{equation*}
which contradicts~\eqref{upperlambda}. 



\medskip
\noindent \textsl{5.-Simplicity.} In this last step, we require the following version of maximum principle, which is the nonlocal version of Lemma 3.1 in \cite{AT}.


\begin{lema}\label{aaa} Let $\alpha < \frac{N + 2s}{2s}$.
Let $u$  be a bounded viscosity solution of 
$$
\M^+ u + \frac{1}{2s} x \cdot Du \geq -\alpha u\quad\mbox{in}\quad B_{R}, 
$$
with $u\leq C_0|x|^{-(N + 2s)}$ for some $C_0 > 0$, and $u\leq 0$ in $B_R$. Then there exists $R_0$ so that if $R>R_0$, then $u\leq 0$ in $\R^N$. 
\end{lema}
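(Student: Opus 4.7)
The plan is to argue by contradiction: assume there is a point where $u>0$, so that $\sup_{\R^N}u>0$ and (since $u\leq 0$ outside $B_R$ by hypothesis) this supremum is attained inside $B_R$. The strategy is to dominate $u$ from above by a multiple of a strict positive supersolution built from Lemma~\ref{lemasuper} and read off a contradiction at the first contact point. Because $\alpha<(N+2s)/(2s)$, I would fix $\beta$ in the nonempty interval $\bigl(\max\{N,2s\alpha\},\,N+2s\bigr)$ and invoke Lemma~\ref{lemasuper} with this $\beta$ and a sufficiently small $c>0$: taking $c$ so small that $C\Gamma c^\theta<\beta/(2s)-\alpha$, the resulting $\Phi=\Phi_{\beta,c}$ satisfies the strict inequality
\begin{equation*}
\M^+\Phi+\tfrac{1}{2s}x\cdot D\Phi+\alpha\Phi<0\quad\text{in } B_{r_c}^{c}.
\end{equation*}
I would then declare $R_0:=r_c$, so that for $R>R_0$ the strict supersolution region contains $B_R\setminus B_{r_c}$.

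For the comparison, set $t^{*}:=\sup_{\R^N\setminus\{0\}}u/\Phi$. Since $\beta<N+2s$ and $u\leq C_0|x|^{-(N+2s)}$, the ratio $u/\Phi$ tends to zero at infinity; since $u$ is bounded and $\Phi\to\infty$ at the origin, $u/\Phi$ also tends to zero at the origin. Therefore $t^{*}$ is finite. If $t^{*}\leq 0$ then $u\leq 0$ globally and we are done; otherwise the supremum is attained at some $\tilde x\neq 0$ with $u(\tilde x)=t^{*}\Phi(\tilde x)>0$ and $u\leq t^{*}\Phi$ in $\R^N$. The point $\tilde x$ cannot lie in $B_R^{c}$, because there $u\leq 0<t^{*}\Phi$; and if $\tilde x\in B_R\setminus B_{r_c}$, using $t^{*}\Phi$ as a test function from above for $u$ produces the viscosity subsolution inequality
\begin{equation*}
\M^+(t^{*}\Phi)(\tilde x)+\tfrac{1}{2s}\tilde x\cdot D(t^{*}\Phi)(\tilde x)+\alpha u(\tilde x)\geq 0,
\end{equation*}
which directly contradicts the strict supersolution inequality displayed above.

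The hard part will be the remaining case $\tilde x\in B_{r_c}$, where $\Phi=cE$, $\M^+E=0$ and $x\cdot DE=\sigma E$, so that
\begin{equation*}
\M^+(t^{*}\Phi)(\tilde x)+\tfrac{1}{2s}\tilde x\cdot D(t^{*}\Phi)(\tilde x)=\tfrac{\sigma}{2s}u(\tilde x)=-\tfrac{\tilde N-2s}{2s}u(\tilde x),
\end{equation*}
which only beats $-\alpha u(\tilde x)$ when $\alpha\leq(\tilde N-2s)/(2s)$. To cover the full range $\alpha<(N+2s)/(2s)$ one must either perturb $\Phi$ inside $B_{r_c}$ by a smooth strictly positive function restoring strict supersolution character near the origin, or first use the previous step to rule out contact in $B_R\setminus B_{r_c}$ and then apply a separate maximum principle on the bounded domain $B_{r_c}$ with data $u\leq 0$ on $\partial B_{r_c}$, invoking Corollary~\ref{corvarphi} to control the principal eigenvalue of $\M^++\tfrac{1}{2s}x\cdot D+\alpha I$ on $B_{r_c}$. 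This is precisely where the quantitative requirement $R>R_0$ and the dependence of $R_0$ on $N$, $s$ and the ellipticity constants enter, and it is the main technical hurdle I would expect to grind through.
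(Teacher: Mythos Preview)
You have the two regions swapped. The hypothesis is $u\leq 0$ \emph{in} $B_R$, not outside it; your sentence ``since $u\leq 0$ outside $B_R$ by hypothesis'' inverts this. Correspondingly, the displayed subsolution inequality is meant to hold in $B_R^c$ (the ``in $B_R$'' in the statement is a typo, as is clear both from the paper's own proof, where the touching point is taken in $B_R^c$, and from the application in the simplicity step, where one first secures $w_s\leq 0$ on a large ball and then uses the lemma to propagate the sign to all of $\R^N$).

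With the hypotheses read correctly, your ``hard case'' $\tilde x\in B_{r_c}$ never occurs. One simply takes $R_0\geq r_c$; then $u\leq 0$ in $B_R\supset B_{r_c}$, and since $u/\Phi\to 0$ at infinity (because $\beta<N+2s$), any contact point with $u(\tilde x)=\eta\Phi(\tilde x)>0$ must lie in $B_R^c\subset B_{r_c}^c$, precisely where the strict supersolution estimate from Lemma~\ref{lemasuper} is available. Testing there gives $\alpha\geq(\beta-\epsilon/2)/(2s)>\alpha$, a contradiction. This is the paper's entire argument, and it coincides with what you call the easy case; the perturbation of $\Phi$ near the origin and the auxiliary maximum principle on $B_{r_c}$ that you sketch are artifacts of the misreading and are not needed.
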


\begin{proof}The result for $\alpha \leq 0$ is direct. Assume $0 < \alpha$ and let $\beta$ such that
$2s \alpha < \beta < N + 2s$ and denote $\epsilon = \beta - 2s\alpha$. Consider $\Phi$ the function of Lemma~\ref{lemasuper} defined with such an exponent $\beta$. 

Take $R_0 > r_c$ as in Lemma~\ref{lemasuper} and fix $c$ small enough in order to have
\begin{equation*}
\M+ \Phi + \frac{1}{2s} x \cdot \Phi \leq -\frac{\beta - \epsilon/2}{2s} \Phi \quad \mbox{in} \ B_{r_c}^c.
\end{equation*}

Suppose now by contradiction that for some $R \geq R_0$, there exists $x_R$ with $|x_R|>R$ such that $u(x_R)>0$. Then, since $u \leq 0$ in $B_R$ and $u$ decays faster than $\Phi$ at infinity, there exists $\eta > 0$ such that $u \leq \eta \Phi$ in $\R^N \setminus \{ 0 \}$ and a point $x \in B_{R}^c$ such that $u(x) = \eta \Phi(x)$. 

Then, using $\eta \Phi$ as a test function for $u$ and the estimate above, we obtain
\begin{equation*}
\alpha \geq \frac{\beta - \epsilon/2}{2s},
\end{equation*}
but this is a contradiction.
\end{proof}

Now note that by step 3 above $\alpha= \lambda_1^+$ satisfies the hypothesis of the Lemma \ref{aaa}.

\medskip
\noindent
{\textit{Proof of simplicity of $\lambda_1^+$:}} Suppose there exists another positive eigenfunction $\tilde\phi$ with eigenvalue $\tilde\lambda$. By the characterization of $\lambda_1^+$, $\tilde\lambda\leq \lambda_1^+$ and we also have the decay estimate of step 4 for $\tilde\phi$.  We normalize $\tilde \phi$ in such a way
$ \tilde\phi(x_0)>\phi_1^+(x_0)$, for $x_0$ fixed. Then, 
$$
\M^+ \tilde \phi + \frac{1}{2s} x \cdot D \tilde \phi =-\tilde\lambda\tilde\phi\geq -\lambda_1^+\tilde\phi,
$$
and $w_s:=\tilde\phi-s\phi_1^+$ is negative in $B_{R_0}$ for $s$ large. Note that 
$$
\M^+ \tilde w_s + \frac{1}{2s} x \cdot D w_s
\geq -\lambda_1^+w_s.
$$ 

Then, by the Lemma \ref{aaa} and the strong maximum principle we have $w_s<0$ in  $\R^N$.

Now define 
\[
s^*=\inf\{s>1\, |\,w_s< 0\,\quad\mbox{in}\quad\R^N \}.
\]
Notice that $s^*>1$ by the normalization, since $w_{s^*}\leq 0$. If $w_{s^*}\equiv 0$ there is nothing to prove.
Now, if there is a point $\tilde x$ such that $w_{s^*}(\tilde x)<0$, by the strong maximum principle $w_{s^*}<0$ in $\R^N $. Let $\delta>0$ small such that $w_{s^*-\delta}<0$ in $B_{R_0}$. By Lemma \ref{aaa} and the strong maximum principle $w_{s^*-\delta}<0$ in $\R^N$, which contradicts the definition of $s^*$. This concludes the proof of simplicity.

Finally, since the equation is invariant under rotation and by simplicity of the eigenvalue, we deduce that $\phi_1^+$ is radially symmetric. 

The proof of Theorem~\ref{mainTheo} is now complete.
\qed

\bigskip

\noindent {\bf Acknowledgements.} 

G. D. was partially supported by Fondecyt Grant No. 1190209.

A. Q. was partially supported by Fondecyt Grant No. 1190282  and Programa Basal, CMM. U. de Chile. 

E. T. was partially supported by  Conicyt PIA Grant No. 79150056, and Fondecyt Iniciaci\'on No. 11160817.



\begin{thebibliography}{00}


\bibitem{AT}
Armstrong, S.; and Trokhimtchouk, M. {\it Long-time asymptotics for fully nonlinear homogeneous parabolic equations.} Calc. Var. Partial Differential Equations 38 (2010), no. 3-4, 521-540.

%
%
%


%
%
%


\bibitem{BGQ}
Barrios, B., Garc\'ia-Meli\'an, J. and Quaas, A. {\em A Note on the Monotonicity of Solutions for Fractional Equations in half-space.}  Proc. of AMS, 
https://doi.org/10.1090/proc/14469.

\bibitem{BPSV}
Barrios, B., Peral, I., Soria, F. and Valdinoci, E. {\em A Widder's type theorem for the Heat equation with nonlocal diffusion.} Arch. Rational Mech. Anal. 213 (2014) 629-650.

\bibitem{Bass} Bass, R. F. , and Kassmann, M. {\em Harnack inequalities for non-local operators of variable order.} Trans. Amer. Math. Soc., 357(2):837-850 , 2005.

\bibitem{Bass1} Bass, R.F., and Levin, D. A. {\em Harnack inequalities for jump processes.} Potential Anal., 17(4):375-388, 2002.

\bibitem{BNV}
Berestycki, H., Nirenberg, L. and Varadhan, S. {\em The principal eigenvalue and maximum principle for second order elliptic operators in general domains.} Comm. Pure Appl. Math. 47 (1) (1994) 47-92.

\bibitem{Hamel} 
Berestycki, H., Hamel, F., and Rossi, L. {\em Liouville-type results for semilinear elliptic equations in unbounded domains,} Ann. Mat. Pura Appl. 186 (3) (2007) 469-507.

\bibitem{Blu} 
Blumenthal, R.M; and Getoor, R.K. {\em Some theorems on stable processes.} Trans. Amer. Math. Soc. 95 (1960) 263-273.

\bibitem{Bogdan} 
Bogdan, K., and Byczkowski, T. {\em Potential theory for the s-stable Schr\"odinger operator on bounded Lipschitz domains.} Studia Math. 133 (1999), no. 1, 53-92.


\bibitem{BV}
Bonforte, M. and V\'azquez, J.L. {\em Quantitative local and global a priori estimates for fractional nonlinear diffusion equations.} Advances in Mathematics 250 (2014) 242-284.
%
%


%

\bibitem{CabreSire}
Cabr\'e, X. and Sire, Y. {\em Nonlinear equations for fractional Laplacians, I: Regularity, maximum principles, and Hamiltonian estimates}, Ann. Inst. H. Poincare Anal. Non Lineaire, 31 (2014), pp. 23"1¤71-753.


\bibitem{CS1} 
Caffarelli, L., and  Silvestre, L. {\em An extension problem related to the fractional Laplacian}, Comm. Partial Differential Equations 32 (2007) 1245-1260.

\bibitem{CS}
Caffarelli, L. and Silvestre, L. {\em Regularity Theory For Nonlocal Integro-Differential Equations.} Comm. Pure Appl. Math, Vol. 62 (2009), no. 5, 597-638.


\bibitem{CD}
ChangLara, H, and D\'avila, G. {\em Regularity for solutions of non local, non symmetric equations.} Ann. Inst. H. Poinca\'e Anal. Non Lin\'eaire 29 (2012), no. 6, 833-859. 




%
%
%
%
%
\bibitem{DQT1}
D\'avila, G., Quaas, A. and Topp, E. {\em Continuous viscosity solutions for nonlocal Dirichlet problems with coercive gradient terms.}  Math. Ann. (2016). doi:10.1007/s00208-016-1481-3.

\bibitem{DQT}
D\'avila, G., Quaas, A. and Topp, E. {\em Existence, Nonexistence and Multiplicity Results for Fully Nonlinear Nonlocal Dirichlet Problems.} J. Differential Equations 266 (2019), no. 9, 5971"1¤71-75997. 

%
\bibitem{FKS}
Fabes, E. B., Kenig, C. E. and Serapioni, R. P. {\em The local regularity of solutions of degenerate elliptic equations.Comm. Partial Differential Equations} 7(1):77-116, 1982.

\bibitem{FQ}  
Felmer, P. and Quaas, A. {\em Fundamental solutions and Liouville type theorems for nonlinear integral operators.} Adv. in Math., 226, 3,  2011, 2712-2738.
%

%
%
%
%
%


%
%
%
 






%
%



\bibitem{MQ} 
Meneses, R., and Quaas, A. {\em Fujita type exponent for fully nonlinear parabolic equations and existence results}, J. Math. Anal. Appl. 376(2011), 514-527.



%
%

\bibitem{Serra}
Serra, J. {\em $C^{\sigma+\alpha}$ regularity for concave nonlocal fully nonlinear
elliptic equations with rough kernels.} Calc. Var. (2015) 54: 3571. https://doi.org/10.1007/s00526-015-0914-2 







\bibitem{Tan} 
Tan, J., and Xiong, J. {\em A Harnack inequality for fractional Laplace equations with lower order terms}, Discrete Contin. Dyn. Syst. 31 (3) (2011)
975-983.




\end{thebibliography}
\end{document}